\documentclass[reqno]{amsart}

\usepackage{caption}
\usepackage{subcaption}
\usepackage{booktabs}
\usepackage{siunitx}
\usepackage[utf8]{inputenc}
\usepackage{hyperref}
\usepackage{graphicx} % Allows including images
\usepackage{booktabs} % Allows the use of \toprule, \midrule and \bottomrule in tables
\usepackage{bbm}
\usepackage{soul}
\usepackage{tikz}
\usepackage{mathrsfs, amsmath, amsthm, tikz-cd, color}
\usetikzlibrary{shadows}
\usepackage{scalerel,stackengine}
\usepackage{cleveref}
 \usepackage{quiver}
\usepackage{float}
\usepackage{array}

\numberwithin{equation}{section}

\newcommand{\vertiii}[1]{{\left\vert\kern-0.25ex\left\vert\kern-0.25ex\left\vert #1 
    \right\vert\kern-0.25ex\right\vert\kern-0.25ex\right\vert}}
\stackMath
\newcommand\reallywidehat[1]{%
\savestack{\tmpbox}{\stretchto{%
  \scaleto{%
    \scalerel*[\widthof{\ensuremath{#1}}]{\kern-.6pt\bigwedge\kern-.6pt}%
    {\rule[-\textheight/2]{1ex}{\textheight}}%WIDTH-LIMITED BIG WEDGE
  }{\textheight}% 
}{0.5ex}}%
\stackon[1pt]{#1}{\tmpbox}%
}
\usepackage{csquotes}       % Quotation marks
\usepackage{microtype}      % Improved typography
\usepackage{amssymb}        % Mathematical symbols
\usepackage{mathtools}      % Mathematical symbols
\usepackage{xcolor}
\usepackage{eucal}
\usepackage[foot]{amsaddr}

\newtheorem{theorem}{Theorem}[section]
\newtheorem{proposition}[theorem]{Proposition}

\newtheorem*{lemma*}{Lemma}

\theoremstyle{definition}

\newtheorem*{definition*}{Definition}
\newtheorem*{proposition*}{Proposition}
\newtheorem{example}{Example}
\newtheorem{remark}[theorem]{Remark}

\newcommand{\R}{\mathbb{R}}

\newcommand{\Ac}{\mathcal{A}}
\newcommand{\Bc}{\mathcal{B}}
\newcommand{\Cc}{\mathcal{C}}
\newcommand{\Ec}{\mathcal{E}}

\newcommand{\Ic}{\mathcal{I}}
\newcommand{\Uc}{\mathcal{U}}

\newcommand{\Sc}{\mathcal{S}}

\newcommand{\Hf}{\mathfrak{H}}
\newcommand{\Bf}{\mathfrak{B}}

\newcommand{\Zf}{\mathfrak{Z}}
\newcommand{\dom}{\mathrm{dom}}

\newcommand{\tr}{\mathrm{tr}}

\newsavebox{\boxedtikzcdbox}

\title{Approximation properties of double complexes}

\author{Daniel F. Holmen\textsuperscript{1,*}}
\author{Jan M. Nordbotten\textsuperscript{1,2}}
\author{Jon E. Vatne\textsuperscript{3}}
\address{*Corresponding author: daniel.holmen@uib.no}
\address{\textsuperscript{1} Center for Modeling of Coupled Subsurface Dynamics, Department of Mathematics, University of Bergen, Allégaten 41, Bergen, Norway}
\address{\textsuperscript{2} Division of Energy and Technology, NORCE Norwegian Research Center AS, Nyg\r{a}rdsgaten 112, 5008 Bergen Norway}
\address{\textsuperscript{3} Department of Economics, BI Norwegian Business School, Kong Christian Frederiks plass 5, Bergen, Norway}

\date{}
\begin{document}
\begin{abstract}
We consider the simplicial de Rham complex and the \v{C}ech-de Rham complex, two bigraded Hilbert complexes whose Hodge-Laplace problems govern spatially coupled problems in mixed dimension and homogeneous dimension, respectively. The former complex can be realized as a subcomplex of the latter.
%It has previously been shown that the simplicial de Rham complex can be embedded as a subcomplex of the \v{C}ech-de Rham complex \cite{holmen2024injective}. 
In this paper, we quantify how close these complexes are to each other by constructing bounded cochain complexes between them, and thus we quantify how close a mixed-dimensional formulation of a problem is to an equidimensionally coupled formulation of the same problem. 

From this construction, we derive a priori- and a posteriori error estimates between the associated Hodge-Laplace problems on the two complexes. These estimates represent the error which is introduced by treating a spatially coupled problem as mixed-dimensional, rather than an equidimensional problem with thin overlaps. 

%We answer this question by relying on the theory from finite element exterior calculus, and constructing a bounded cochain projection from the domain complex of the Hilbert \v{C}ech-de Rham complex to the embedded simplicial de Rham complex. From this construction, 
\end{abstract}

\maketitle

%\tableofcontents

\section{Introduction} \label{section: intro}
Hilbert complexes are obtained by combining the familiar Hilbert spaces from functional analysis with cochain complexes from homological algebra, providing a rich mathematical structure for modeling and analysis of elliptical PDEs. The prototypical example of such a cochain complex is the de Rham complex, and the associated Hodge-Laplace problem on the de Rham complex confines both scalar and vector-Laplace problems with corresponding boundary conditions.

In the framework of finite element exterior calculus (FEEC) \cite{arnoldFEEC3, arnoldFEEC2}, a bounded cochain projection is constructed from infinite-dimensional Hilbert complexes to their finite-dimensional counterparts. These projections lead to several important results related to the consistency, stability and convergence of mixed finite element methods. 

Using the same concepts and similar techniques, we construct a bounded cochain projection between two infinite-dimensional Hilbert complexes; the \v{C}ech-de Rham complex \cite{bott-tu, cdrhodge} and the embedded mixed-dimensional de Rham complex \cite{mdG, holmen2024injective}. 

Concretely, we obtain approximation properties between the Hodge-Laplace problems on the simplicial de Rham complex and the \v{C}ech-de Rham complex, which have been shown to correspond to two different modeling paradigms for spatially coupled problems. Such model approximations for elliptic equations appear in e.g. \cite{koppl2018mathematical, list2020rigorous}.

This article is structured as follows: \cref{section: intro} gives a overview over the theory of Hilbert complexes, as well as introducing the specific Hilbert complexes of interest: the simplicial de Rham complex and the \v{C}ech-de Rham complex. \Cref{section: bcp} discusses Hilbert subcomplexes and bounded cochain projections. In \cref{section: error estimates}, we derive a priori error estimates based on the abstract Hilbert complex theory from \cite{arnoldFEEC3}. We also derive functional a posteriori error estimates based on \cite{pauly2020solution}. Finally, in \cref{sec:examples} we consider a simple coupled problem as an example: two one-dimensional elastically joined rods.

\subsection{Hilbert complexes}\label{subsection: HC}
We begin by establishing basic definitions and notations for Hilbert complexes. We refer to \cite{arnoldFEEC3, arnoldFEEC2} for a more comprehensive study on the topic. 

A \emph{Hilbert complex} $(A^\bullet, d_A)$ is a cochain complex where each object is a Hilbert space and each differential $d_A^k: A^k \to A^{k+1}$ is a closed densely-defined unbounded linear operator:
\begin{equation}
 % https://q.uiver.app/#q=WzAsNSxbMSwwLCJBXntrLTF9Il0sWzAsMCwiLi4uIl0sWzIsMCwiQV57a30iXSxbMywwLCJBXntrKzF9Il0sWzQsMCwiLi4uIl0sWzEsMF0sWzAsMiwiZF57ay0xfSJdLFsyLDMsImRee2t9Il0sWzMsNF1d
\begin{tikzcd}
	{...} & {A^{k-1}} & {A^{k}} & {A^{k+1}} & {...}
	\arrow[from=1-1, to=1-2]
	\arrow["{d^{k-1}}", from=1-2, to=1-3]
	\arrow["{d^{k}}", from=1-3, to=1-4]
	\arrow[from=1-4, to=1-5]
\end{tikzcd}   
\end{equation}
We frequently omit the subscript indicating the domain of the differential operator whenever convenient. Moreover, all Hilbert complexes we are working with are closed (i.e. a Hilbert complex where $\mathrm{im} \; d_A^k$ is closed in $A^{k+1}$ for every integer $k$), and we therefore assume throughout this section that  $(A^\bullet, d_A)$ is a closed Hilbert complex.

Since the differentials $d_A$ are not necessarily defined everywhere on $A^k$, there is an associated subcomplex of $A^k$ which we refer to as the \emph{domain complex}, defined by the domains of the differential operators $d_A$ (we write $\dom\; d^k_A = \Ac^k $):
\begin{equation}
% % https://q.uiver.app/#q=WzAsNSxbMSwwLCJcXG1hdGhjYWx7QX1ee2stMX0iXSxbMiwwLCJcXG1hdGhjYWx7QX1ee2t9Il0sWzMsMCwiXFxtYXRoY2Fse0F9XntrKzF9Il0sWzAsMCwiLi4uIl0sWzQsMCwiLi4uIl0sWzAsMSwiZF57ay0xfSJdLFsxLDIsImReayJdLFszLDBdLFsyLDRdXQ==
\begin{tikzcd}
	{...} & {\mathcal{A}^{k-1}} & {\mathcal{A}^{k}} & {\mathcal{A}^{k+1}} & {...}
	\arrow[from=1-1, to=1-2]
	\arrow["{d^{k-1}}", from=1-2, to=1-3]
	\arrow["{d^k}", from=1-3, to=1-4]
	\arrow[from=1-4, to=1-5]
\end{tikzcd}
\end{equation}
%The domain complex is equipped with a graph inner product 
%\begin{equation}
%\langle u,v \rangle_{\Ac^k} = \langle u,v \rangle + \langle du, dv \rangle_{A^{k+1}},
%\end{equation}
%which induces a graph norm 
%\begin{equation}
%\|u\|_{\Ac^k}^2 = \|u\|_{A^k}^2 + \|du\|^2_{A^{k+1}}.
%\end{equation}

Throughout the text we will employ the convention of using capital letters (e.g. $A, B, C$) for Hilbert complexes and calligraphic letters (e.g. $\Ac, \Bc, \mathcal{C}$) for their respective domain complexes. For the subspaces of $A^k$, we use Fraktur letters to denote the image $\mathfrak{B}^k = \mathrm{im} \; d^{k-1} \subset A^k$ and the kernel $\mathfrak{Z}^k = \ker d^k \subset A^k$. Since $d^2 = 0$, we have that $\mathfrak{B}^k \subset \mathfrak{Z}^k$, and we write $\mathfrak{H}^k$ for the cohomology $\mathfrak{Z}^k/\mathfrak{B}^k$. When it is not immediately clear which Hilbert complex the subspaces belong to, we will write $\mathfrak{B}^k(A)$, $\mathfrak{Z}^k(A)$ and $\mathfrak{H}^k(A)$ for emphasis. 

Since each $A^k$ is a Hilbert space, there is an associated inner product $\langle u,v \rangle_{A^k}$ for $u,v \in A^k$ which defines a norm $\|v\|_{A^k} = \sqrt{\langle v, v \rangle_{A^k}}$. In addition to the inner product and the norm on the full Hilbert complex, we also have a \emph{graph inner product} and a \emph{graph norm} associated to the domain complex $(\Ac^\bullet, d)$:
\begin{equation} \label{eq: graph ip + graph norm}
\langle u,v \rangle_{\Ac^k} = \langle u,v \rangle_{A^k} + \langle du, dv \rangle_{A^{k+1}}, \qquad \|v\|_{\Ac}^2 = \|v\|_{A^k}^2 + \|d v\|_{A^{k+1}}^2.
\end{equation}

Since we have an inner product, we can define the adjoint of the differential operator $d$:
\begin{equation}
\langle du, v \rangle_{A^{k+1}} = \langle u, d^*v \rangle_{A^k}, \qquad u \in A^k, v \in A^{k+1}.
\end{equation}
The adjoint operator $d^*$ forms a chain complex called the \emph{adjoint complex} which also has an associated domain complex:
\begin{equation}
% https://q.uiver.app/#q=WzAsNSxbMSwwLCJcXG1hdGhjYWx7QX1eKl97ay0xfSJdLFsyLDAsIlxcbWF0aGNhbHtBfV97a31eKiJdLFszLDAsIlxcbWF0aGNhbHtBfV4qX3trKzF9Il0sWzAsMCwiLi4uIl0sWzQsMCwiLi4uIl0sWzIsMSwiZF4qX3trKzF9IiwyXSxbMCwzXSxbNCwyXSxbMSwwLCJkXipfayIsMl1d
\begin{tikzcd}
	{...} & {\mathcal{A}^*_{k-1}} & {\mathcal{A}_{k}^*} & {\mathcal{A}^*_{k+1}} & {...}
	\arrow[from=1-2, to=1-1]
	\arrow["{d^*_k}"', from=1-3, to=1-2]
	\arrow["{d^*_{k+1}}"', from=1-4, to=1-3]
	\arrow[from=1-5, to=1-4]
\end{tikzcd}
\end{equation}
We denote subspaces of the adjoint complexes by $\mathfrak{B}^*_k = \mathrm{im} \; d^*_{k+1}$ and $\mathfrak{Z}_k^* = \ker d^*_k$. By slight abuse of notation, we use the same symbol for the space of harmonic forms $\Hf^k =\Zf^k \cap \Zf^*_k$, as it is isomorphic to the cohomology space $\Zf^k/\Bf^k$.

In general, we work with an inner product that can be weighted by a bijective, symmetric, bounded linear operator $w_k$: 
\begin{equation}
\langle u, v \rangle_{A^k_w} = \langle w_k u, v \rangle_{A^k}.
\end{equation}
The weighted inner product can be used to account for physical properties such as material law, or a length scale $\epsilon$ for mixed-dimensional representations (where $\epsilon$ is the physical length in the collapsed dimension).

Note that we in principle have different weights at different degrees of our cochain complex. The differential operator for a weighted Hilbert complex remains the same as in the case of unit weights. The adjoint on the other hand is defined through the inner product and will therefore depend on the choice of weights $w_k$.

We emphasize the weight in the adjoint by writing $d^{*, w}$. The weighted adjoint can be expressed in terms of the unit weighted adjoint through the following relationship:
\begin{equation}
d^{*, w}_k = w^{-1}_{k-1} d^{*,1}_k w_k,
\end{equation}
where $d^{*,1}_k$ is the adjoint with respect to the unweighted inner product.
%We require the weights to be symmetric, positive definite operators $w_k: A^k \to A^k$. 
The differential and the codifferential together define a generalization of the usual Laplace operator in this abstract Hilbert complex setting, called the \emph{Hodge-Laplace operator} or \emph{Hodge-Laplacian}:
\begin{equation}
\Delta_{d,w}^k = d^{k-1} d^{*,w}_k + d_{k+1}^{*,w} d^k.
\end{equation}

The Hodge-Laplace operator in turn defines the problem of interest, the \emph{Hodge-Laplace problem}: given $f \perp \ker(\Delta_{d,w}^k)$ find $v \in \dom(\Delta_{d,w}^k)$ such that
\begin{equation}\label{eq: Hodge-Laplace problem}
\Delta_{d,w}^k v = f, \qquad v \perp \Hf^k.
\end{equation}

The following orthogonal decomposition of $A^k$ is valid for each integer $k$:
\begin{align}\label{eq: hodge decomposition}
A^k &= \mathrm{im \;} d^{k-1} \oplus \ker \Delta_{d,w}^k \oplus \mathrm{im \;} d^{*,w}_{k+1} = \mathfrak{B}^k \oplus \mathfrak{H}^k \oplus \mathfrak{B}^{*,w}_k.
\end{align}
The sub/superscript indicating that the adjoint differential and the Hodge-Laplacian are weighted will regularly be suppressed. A similar decomposition can be achieved for non-closed Hilbert complexes by taking the closure of $\Bf^k$ and $\Bf^{*,w}_k$. The Hodge decomposition allows for defining projection operators on $A^k$, for each of the three orthogonal subspaces. We write $P_\Bf^k$, $P_\Hf^k$ and $P_{\Bf^*}^k$ for projections from $A^k$ onto, $\Bf^k$, $\Hf^k$ and $\Bf^{*,w}_k$, respectively.

By replacing the right-hand side of \cref{eq: Hodge-Laplace problem} with $f - P_\Hf f$, we can formulate the Hodge-Laplace problem for initial data $f \in A^k$, without explicitly requiring orthogonality to $\ker(\Delta_{d,w}^k)$.

Finally, we have a Poincaré inequality for each integer $k$, which tells us that there exists a constant $C_{\Ac, k}$ (called the Poincaré constant) such that
\begin{equation}\label{eq: poincare ineq}
\|v\|_{\Ac^k} \leq C_{\Ac, k} \|dv\|, \qquad \forall  v \in \Ac^k \cap \Zf^{k, \perp},
\end{equation}
where $\Zf^{k, \perp}$ is the orthogonal complement of $\Zf^k$ in $A^k$. From the Hodge decomposition \cref{eq: hodge decomposition}, we can identify $\Zf^{k,\perp}$ with $\mathfrak{B}^{*,w}_k$. The inequality \cref{eq: poincare ineq} relies on $(A^\bullet, d)$ being a closed Hilbert complex.

\subsection{Double Hilbert complexes}
Ultimately, we are interested in Hilbert complexes defined on a hierarchy of domains rather than a single domain. We therefore consider products of Hilbert spaces for each domain in the hierarchy, which leads to Hilbert complexes which are bigraded.

A \emph{double cochain complex} is a lattice of bigraded objects $(A^{p,q}, d_h, d_v)$ with a horizontal and vertical differential, 
\begin{subequations}    
\begin{align}
d_h: & \Ac^{p,q} \subset A^{p,q} \to A^{p+1,q}, \\
d_v: &\Ac^{p,q} \subset A^{p,q} \to A^{p,q+1},
\end{align}
\end{subequations}

where each row $A^{\bullet, q}$ is a cochain complex, each column $A^{p, \bullet}$ is a cochain complex and the two differentials anti-commute:
\begin{subequations}
\begin{align}
d_h d_h &= 0, \\
d_v d_v &= 0, \\
d_v d_h &= - d_h d_v.
\end{align}
\end{subequations}

We assume the double complex to be in the first quadrant, i.e. both $p \geq 0$ and $q \geq 0$. Moreover, we will work with double complexes that are bounded with respect to both the degree $p$ and $q$, meaning that $A^{p,q} = 0$ if either $p>p_{max}$ or $q>q_{max}$.

Given a double complex $A^{p,q}$, we define the \emph{total complex} of $A^{p,q}$ as the single-graded cochain complex 
\begin{equation}
(\mathrm{Tot}A^{p,q})^k = A^k = \bigoplus_{k = p + q} A^{p,q}, \qquad D = d_h + d_v.
\end{equation}
Since we assume that the bigraded double complex is bounded with respect to both degree $p$ and $q$, the total complex is also necessarily bounded.

In our case, we are interested in double complexes where each $A^{p,q}$ is a Hilbert spaces, and both the horizontal differential $d_h$ and vertical differential $d_v$ are closed densely-defined unbounded linear operators. Moreover, we require $d_v d_h$ and $d_h d_v$ to be well-defined. Hence, we require that $\mathrm{im}(d_v^{p,q}) \subset \dom(d_h^{p,q+1})$ and  $\mathrm{im}(d_h^{p,q}) \subset \dom(d_v^{p+1,q})$.

Each of the Hilbert spaces $A^{p,q}$ has an inner product, which allows us to define an inner product on the total complex $A^k$:
\begin{equation}
\langle u, v \rangle_{A^k} = \sum_{p+q=k}  \langle u_{p,q}, v_{p,q} \rangle_{A^{p,q}}, \qquad u_{p,q}, v_{p,q} \in A^{p,q}.
\label{eq: double inner product}
\end{equation}

In general, the domain of $D = d_h + d_v$ is the intersection $\dom (d_h) \cap \dom(d_v)$, which may not be dense. For two unbounded operators $d_1$ and $d_2$, a sufficient condition for $\dom(d_1 + d_2)$ to be dense is given in \cite{lennon1974sums}. Similarly, a sufficient condition for $d_1 + d_2$ to be closed is presented in \cite[Chapter 4, Theorem 1.1]{kato2013perturbation}. In practice, these conditions are satisfied for the Hilbert complexes of interest in this paper.

Under the assumption that $\dom (d_h) \cap \dom(d_v)$ is dense and $D = d_h + d_v$ is closed, the total complex $(A^\bullet, D)$ is a Hilbert complex. The domain complex $\Ac^k \subset A^k$ is equipped with the graph norm $\|v\|^2_{\Ac^k} = \|v\|^2_{A^k} + \|Dv\|^2_{A^{k+1}}$.

\subsection{\v{C}ech-de Rham complex and simplicial de Rham complex}
The Hilbert complex of interest to us is the de Rham complex with square-integrable function components. The exterior derivative is not defined everywhere on this complex, but it is defined on a dense subcomplex. We refer to this complex as the $L^2$ de Rham complex, and write $(L^2\Lambda^\bullet, d)$. The domain of the exterior derivative defines a subcomplex which we refer to as the \emph{domain complex}, for which we write $(H\Lambda^\bullet, d)$. For more details on the de Rham complex, see \cite[Chapter 6]{arnoldFEEC}.

We consider a hierarchy of manifolds that are overlapping, each equipped with a de Rham complex. In the case of the \v{C}ech-de Rham complex we have overlapping manifolds, and in the case of the mixed-dimensional de Rham complex each lower-dimensional manifold of dimension $m$ is contained in the boundary of some manifold with dimension $m+1$. By defining a difference operator and a jump operator on these hierarchies we get two double complexes.

Let $\Uc = \{U_i\}_{i \in \Ic}$ be a finite open cover of a manifold $\mathbf{\Omega}$ indexed by an ordered set $\Ic$. We write $i \in \Ic^p$ for an increasing multi-index $(i_0, ..., i_p)$ where each $i_j \in \Ic$, and write $U_i$ with $i \in \Ic^p$ as short-hand for the intersection $U_{i_0} \cap ... \cap U_{i_p}$. We form a product space by taking the de Rham complex of each intersection $U_i$, for each increasing multi-index $i \in \Ic^p$:
\begin{equation}\label{eq: bigraded CdR}
C^{p,q} = C^p(\Uc, L^2\Lambda^q) = \prod_{i \in \Ic^p} L^2\Lambda^q(U_i).
\end{equation}
The spaces $C^{p,q}$ make up the bigraded \v{C}ech-de Rham complex. This product admits a differential operator in the exterior derivative by having it act on each de Rham complex. Moreover, we can define a \emph{difference operator} by taking alternating sums restricted to intersections of degree $p+1$:
\begin{align}
(\delta v)_i &=  \sum_{j=0}^{p+1} (-1)^{k+j}  \; v_{i \setminus j}|_{U_i}, &
\forall i \in \mathcal{I}^{p+1},
\end{align}
where $k=p+q$ and $i \setminus j$ is short-hand for $i_0, ..., i_{j-1}, i_{j+1}, ..., i_{p+1}$, i.e. the multi-index $i$ with the $j$-th entry removed. We write $\Cc^{p,q} = \prod_{i \in \Ic^p} H\Lambda^q(U_i)$ for the domain complex. The difference operator and the exterior derivative together  form a double complex:
\begin{align}
\delta^p:& \Cc^{p,q} \to \Cc^{p+1, q} \\
d^q:& \Cc^{p,q} \to \Cc^{p,q+1}
\end{align}
We are able to form a total complex $\Cc^k = \bigoplus_{p+q=k} \Cc^{p,q}$ with total differential $D^k = \bigoplus_{p+q=k} d^q + \delta^p$.

The simplicial de Rham complex shares a lot of similarities to the \v{C}ech-de Rham complex. We consider a partition of $\mathbf{\Omega}$ in a collection of non-overlapping $n$-manifolds $\{\Omega_i\}_{i \in \Ic}$ indexed by the same index set $\Ic$, such that $\bigcup_{i\in\Ic} \overline{\Omega_i}=\overline{\mathbf{\Omega}}$, and such that the common boundary of two $n$-manifolds $\Omega_{i_0}$ and $\Omega_{i_1}$ is a new manifold $\Omega_i$, $i = (i_0, i_1) \in \Ic^1$. More generally, a manifold of dimension $(n-p)$ is then represented by some multi-index $i \in \Ic^p$. Details regarding the geometrical description is found in \cite{holmen2024injective} (see also \cite{mdG} for a more general mixed-dimensional Hilbert complex).

The choices we make in this work ensure that the open cover $\Uc$ for the \v{C}ech-de Rham complex is not just an open cover of a manifold $\mathbf{\Omega}$, it also satisfies that each $U_i \in \Uc$ covers $\Omega_i \in \{\Omega_i\}_{i\in \Ic}$. More generally, any $U_j$ covers $\Omega_j$, for the same multi-index $j \in \Ic^p$. Moreover, in \cref{proposition: liminf} we consider a family of open covers $\{\Uc_\epsilon\}_{\epsilon \in (0,\epsilon_M)}$ indexed by a non-negative parameter $\epsilon$ which denotes the maximal diameter for overlaps $U_i$, where $i \in \Ic^p$, $p \neq 0$.
For some sections, we consider a weighted inner product for the \v{C}ech-de Rham complex, where we use the parameter $\epsilon$ as a weight, with exponent matching the degree of overlap (which corresponds to codimension in the mixed-dimensional setting):
\begin{equation}\label{eq: weighted cdr inner products}
\langle u_{p,q},v_{p,q} \rangle_{C^{p,q}_\epsilon} = \langle \epsilon^{-p}  u_{p,q},v_{p,q} \rangle_{C^{p,q}}, \qquad \langle u, v  \rangle_{C^{k}_\epsilon} = \sum_{p+q=k}\langle u_{p,q},v_{p,q} \rangle_{C^{p,q}_\epsilon}.
\end{equation}

In the same vein as for the \v{C}ech-de Rham complex in \cref{eq: bigraded CdR}, we define a bigraded product space for the mixed-dimensional hierarchy $\{\Omega_i\}_{i \in \Ic}$:
\begin{equation}
S^{p,q} = \prod_{i \in \Ic^p} L^2\Lambda^q(\Omega_i).
\end{equation}

The \emph{jump operator} is defined similarly to the difference operator from the \v{C}ech-de Rham complex, but in this case we take the trace of differential forms onto boundaries rather than restricting it onto a subset.
\begin{align}
(\delta v)_i &=  \sum_{j=0}^{p+1} (-1)^{k+j}  \; \tr \; v_{i \setminus j}, &
\forall i \in \mathcal{I}^{p+1},
\end{align}
Here, the trace is understood as a mapping of differential forms on $\Omega_{i \setminus j}$ to differential forms on $\Omega_i$. For the domain complex, we want the composition of the two differentials to be well-defined. However, the trace of an element in $H\Lambda^q(\Omega)$ is not necessarily in $H\Lambda^q(\partial \Omega)$. To account for this, we introduce recursively defined Sobolev spaces:
\begin{equation}
H\Lambda^q(\Omega_i, \tr) = \{v \in H\Lambda^q(\Omega_i) : \tr_j \; v \in H\Lambda^q(\Omega_j, \tr) \}.\label{eq: recursive trace spaces}
\end{equation}
These spaces ensure that the trace of a differential form is in the domain of the exterior derivative $d$, as well as allowing for iterative application of the trace operator.

We define the simplicial de Rham complex to be the bigraded cochain complex consisting of the Sobolev spaces from \cref{eq: recursive trace spaces}:
\begin{equation}
\Sc^{p,q} = \prod_{i \in \Ic^p} H\Lambda^q(\Omega_i, \tr), \qquad \Sc^k = \bigoplus_{p+q=k} \Sc^{p,q}.
\end{equation}

%~2 paragraphs on inner products, graph inner products/norms, weighted inner products to include material law and dimension.

In previous work, we have constructed a bounded injective cochain map between the two aforementioned double Hilbert complexes \cite{holmen2024injective}. That is, a bigraded cochain map
\begin{equation}
\Xi^{p,q}: \Sc^{p,q} \to \Cc^{p,q},
\end{equation}
which induces a cochain map between the total complexes:
\begin{equation}
\Xi^{k}: \Sc^{k} \to \Cc^{k}. \label{eq: embedding}
\end{equation}
Moreover, we prove that the cochain map is bounded from below and above (see \cite[Section 4.1]{holmen2024injective}): for each pair $(p,q)$ satisfying $0 \leq p+q \leq n$, there exists constants $C_1$ and $C_2$ such that
\begin{equation} \label{eq: bounded cochain map}
C_1 \|v\|_\Sc \leq \|\Xi^{p,q}(v)\|_\Cc \leq C_2 \|v\|_\Sc, \qquad \forall v \in \Sc^{p,q}.
\end{equation}
The boundedness of the cochain map means that we have an isomorphism from $\Sc$ onto its image by $\Xi^k$, and we therefore have a subcomplex $\Ec^\bullet = \Xi^\bullet(\Sc)$. The \v{C}ech-de Rham complex together with this subcomplex becomes an appropriate setting for an approximation theory between equidimensional- and mixed-dimensional models.

\section{Subcomplexes and bounded cochain projections}\label{section: bcp}
In this section, we will look at two of the fundamental properties in the theory of Hilbert complexes; subcomplexes and the existence of bounded cochain projections. In the theory of FEEC, the existence of bounded cochain projections onto Galerkin subspaces implies stability of the associated finite element method. Analogously, bounded cochain projections from the \v{C}ech-de Rham complex $\Cc^\bullet$ to the subcomplex $\Ec^\bullet$ will provide an approximation theory between solutions of equidimensional problems and solutions of mixed-dimensional problems.

We will review the definitions of bounded cochain projections, show sufficient conditions for existence of bounded cochain projections and review the error estimates that are derived from Hilbert subcomplexes.

\subsection{Bounded cochain projections}
Let $(A^\bullet, d)$ be a Hilbert complex, $(\Ac^\bullet, d)$ be its associated domain complex and suppose that $(\Bc^\bullet, d|_{\Bc})$ is a subcomplex of $(\Ac^\bullet, d)$. That is, for each $k$ we have $\Bc^k \subset \Ac^k$ and $d\Bc^k \subset \Bc^{k+1}$.

The differential operator on $\Bc$ is just the restriction $d|_{\Bc}$, so we can simply write $d$ for both complexes. Similarly, the inner product on $\Bc^k$ is then just restriction of the inner product on $\Ac^k$, so we do not distinguish between $\langle \cdot, \cdot \rangle_\Ac$ and $\langle \cdot, \cdot \rangle_{\Bc}$. Note that the adjoint $d^*_\Bc$ on the subcomplex is not generally equal to the restriction $d^*|_\Bc$.

A \emph{bounded cochain projection} is a collection of linear maps $\pi^k: \Ac^k \to \Bc^k$ which restricts to the identity on $\Bc^k$, commutes with the differential operators (meaning that $\pi^{k+1} \circ d = d \circ \pi^k$) and satisfies the following inequality: there exists a positive constant $\kappa_k$ such that
\begin{equation}\label{eq. bcp1}
\| \pi^k v \|_{\Bc^k} \leq \kappa_k \|v\|_{\Ac^k}, \qquad \forall v \in \Ac^k. 
\end{equation}
Here, the inequality can be in either the full norm ($\|\cdot\|_{A^k}$) or the graph norm (\cref{eq: graph ip + graph norm}). 

\begin{proposition}
Boundedness in the full norm $\|\cdot\|_{A^k}$ implies boundedness in the graph norm $\|\cdot\|_{\Ac^k}$.    
\end{proposition}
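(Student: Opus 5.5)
The argument rests on the commuting property $\pi^{k+1}\circ d = d\circ \pi^k$. Suppose $\pi^\bullet$ is bounded in the full norm, i.e. for every degree $k$ there is $\kappa_k$ with $\|\pi^k v\|_{A^k}\le \kappa_k\|v\|_{A^k}$ for all $v\in\Ac^k$. Here $A^k$ is understood as the ambient Hilbert space, and the norm on $\Bc^k$ is the restriction of that norm, as fixed in the setup of bounded cochain projections.

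Fix $v\in\Ac^k$. By definition of a subcomplex, $\pi^k v\in\Bc^k\subset\Ac^k$, so $d\pi^k v$ is defined. Expanding the graph norm \cref{eq: graph ip + graph norm} and using the commuting property gives
\begin{equation*}
\|\pi^k v\|_{\Ac^k}^2 = \|\pi^k v\|_{A^k}^2 + \|d\pi^k v\|_{A^{k+1}}^2 = \|\pi^k v\|_{A^k}^2 + \|\pi^{k+1} dv\|_{A^{k+1}}^2 .
\end{equation*}
Since $v\in\Ac^k$ and $d\Ac^k\subset\Ac^{k+1}$, the form $dv$ lies in $\Ac^{k+1}$. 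We may therefore apply the full-norm bound at degree $k+1$ to $dv$, and at degree $k$ to $v$:
\begin{equation*}
\|\pi^k v\|_{\Ac^k}^2 \le \kappa_k^2\|v\|_{A^k}^2 + \kappa_{k+1}^2\|dv\|_{A^{k+1}}^2 \le \max(\kappa_k,\kappa_{k+1})^2\,\|v\|_{\Ac^k}^2 .
\end{equation*}
So graph-norm boundedness holds with constant $\max(\kappa_k,\kappa_{k+1})$.

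The only subtle point is that the full-norm bound is needed in two consecutive degrees, $k$ and $k+1$. This follows from reading the definition as a bound holding for each $k$. At the top degree of the (bounded) complex we have $d=0$ on $\Ac^k$, so the second term vanishes and the bound is trivial.
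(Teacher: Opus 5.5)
Your proposal is correct and is essentially the paper's proof. Both expand the graph norm, use $d\pi^k = \pi^{k+1}d$, and apply the full-norm bounds at degrees $k$ and $k+1$ to obtain the constant $\max\{\kappa_k,\kappa_{k+1}\}$; your remarks that $dv\in\Ac^{k+1}$ and that the top degree is trivial are harmless additions.
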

\begin{proof}
We assume that for every $k$,
\begin{equation}
\|\pi^k v \| \leq \kappa_k \|v\|, \qquad \forall v \in \Ac^k. 
\end{equation}
Using the commutative property of the cochain projection, we have 
\begin{equation}
\|\pi^k v\|^2_{\Ac^k} = \|\pi^k v\|^2 + \|d \pi^k v \|^2 =  \|\pi^k v\|^2 + \| \pi^{k+1} d v \|^2.
\end{equation}
Using the boundedness assumption, we get
\begin{equation}
\|\pi^k v\|^2 + \| \pi^{k+1} d v \|^2\leq \kappa^2_k \|v\|^2 + \kappa^2_{k+1} \|dv\|^2 \leq \max\{\kappa_k^2, \kappa_{k+1}^2\} \|v\|^2_{\Ac^k}.
\end{equation}
Hence we have boundedness in the graph norm.
\end{proof}

A bounded cochain projection gives rise to a Poincaré inequality between the two complexes, meaning that there exists a constant $ c_{\Bc, k}$ such that
\begin{equation}
\|v \|_{\Ac^k} \leq c_{\Bc, k} \|\pi^k\| \|dv \|, \qquad v \in \Bc^k \cap \Zf^{k,\perp},
\end{equation}
where $\|\pi^k\|$ is the operator norm of the bounded cochain projection $\pi^k$.

\subsection{Different formulations of the Hodge-Laplace problem} %The Hodge-Laplace problem on the de Rham complex can be viewed as a generalization of the scalar/vector Laplace problem. Moreover, the Hodge-Laplace problem on other Hilbert complexes can represent other type of problems, for instance the Hodge-Laplace problem on the elasticity complex gives rise to Hooke's law and Cauchy momentum equation \cite{AFWelasticity}.

Herein we briefly summarize the many different formulations of the Hodge-Laplace problem: the primal strong formulation, the primal weak formulation and the mixed weak formulation. In addition, we may also formulate the Hodge-Laplacian as a system of first order equations, which is later used for deriving a posteriori error estimates. We also consider the mixed weak formulation of the Hodge-Laplace problem on the subcomplex $\Bc^\bullet \subset \Ac^\bullet$.

We refer to \cref{eq: Hodge-Laplace problem} as the \emph{primal strong formulation} of the Hodge-Laplace problem. Rather than insisting that $f \perp \ker(\Delta_{d,w}^k)$, we can write \cref{eq: Hodge-Laplace problem} as
\begin{equation}\label{eq: Hodge-Laplace primal strong}
\Delta_{d,w}^k v = f - P_\Hf f, \qquad v \perp \Hf^k.
\end{equation}
where $P_\Hf$ denotes the projection associated with the Hodge decomposition in \cref{eq: hodge decomposition}. 

In certain applications, the initial data $f$ is sometimes entirely in the space $\Bf^k$. In this case, the Hodge-Laplace problem is formulated as follows: given $f \in \Bf^k$, find $v \in \Bf^k$ such that
\begin{equation}\label{eq: B-problem}
D^{k-1}D^*_{k}v=f, \qquad v \perp \Hf^k.
\end{equation} 
Likewise, given $g \in \Bf^*_k$, the Hodge-Laplace problem becomes to find $v \in \Bf^*_k$ such that $D^*_{k+1}D^{k} v = g$. We refer to these two special cases as $\Bf$-problems and $\Bf^*$-problems, respectively. Note that the Hodge-Laplace problems for $k=n$ and $k=0$ are $\Bf$-problems and $\Bf^*-$problems, respectively.

To write the Hodge-Laplacian in a weak formulation, we integrate with respect to a test-differential form $v'$ which vanishes on the boundary of the domain:
\begin{equation}
\langle (d^{k-1} d^{*,w}_k + d_{k+1}^{*,w} d^k) v, v' \rangle = \langle f - P_\Hf f, v' \rangle, \qquad v \perp \Hf^k.
\end{equation}
Since we assume $v'$ to vanish on the boundary, we can use linearity of the inner product and integration by parts to write the equation in the \emph{primal weak formulation}:
\begin{equation}\label{eq: Hodge-Laplace primal weak}
\langle dv, dv' \rangle + \langle d^*v, d^*v' \rangle = \langle f - P_\Hf f, v' \rangle, \qquad v \perp \Hf^k.
\end{equation}
We introduce the following variables: $u = d^*v$ and $q = P_\Hf f$, and obtain the \emph{mixed weak formulation} of the Hodge-Laplace problem: find $(u,v,q) \in \Ac^{k-1} \times \Ac^{k} \times \Hf^k$ such that the following set of equations hold:
\begin{subequations}\label{eq: Hodge-Laplace mixed weak}
\begin{align}
\langle u, u' \rangle - \langle v, du' \rangle &= 0, \qquad &\forall u' \in \Ac^{k-1}, \\
\langle du, v' \rangle + \langle dv, dv' \rangle + \langle q, v' \rangle &= \langle f, v' \rangle, \qquad &\forall v' \in \Ac^k, \\
\langle v, q' \rangle &= 0, \qquad &\forall q' \in \Hf^k(\Ac).
\end{align}
\end{subequations}

The three different formulations of the Hodge-Laplace problem \cref{eq: Hodge-Laplace primal strong}, \cref{eq: Hodge-Laplace primal weak} and \cref{eq: Hodge-Laplace mixed weak} are all equivalent and well-posed \cite[Theorem 4.7 and 4.8]{arnoldFEEC}.

%Include discrete/subcomplex HL problem
In addition to the three main formulations of the Hodge-Laplace problem, we can also formulate a subcomplex mixed formulation of the Hodge-Laplace problem. We write $\Zf^k(\Bc)$ and $\Bf^k(\Bc)$ for the subspace kernel and subcomplex image, respectively. Note that $\Zf^k(\Bc) \subset \Zf^k(\Ac)$ and likewise $\Bf^k(\Bc) \subset \Bf^k(\Ac)$. We define the subspaces of harmonic forms on $\Bc^\bullet$ by
\begin{equation}
\Hf^k(\Bc) = \{v \in \Zf^k(\Bc): v \perp \Bf^k(\Bc) \}.
\end{equation}
In general, $\Hf^k(\Bc) \not\subset \Hf^k(\Ac)$. The mixed formulation of the subcomplex Hodge-Laplace problem is as follows: given $f \in A^k$, find $(u_\Bc, v_\Bc, q_\Bc) \in \Bc^{k-1} \times \Bc^k \times \Hf^k(\Bc)$ such that
\begin{subequations}\label{eq: Hodge-Laplace subcomplex mixed weak}
\begin{align}
\langle u_\Bc, u' \rangle - \langle v_\Bc, du' \rangle &= 0, \qquad &\forall u' \in \Bc^{k-1}, \\
\langle du_\Bc, v' \rangle + \langle dv_\Bc, dv' \rangle + \langle q_\Bc, v' \rangle &= \langle f, v' \rangle, \qquad &\forall v' \in \Bc^k, \\
\langle v_\Bc, q' \rangle &= 0, \qquad &\forall q' \in \Hf^k(\Bc).
\end{align}
\end{subequations}

\subsection{Existence of bounded cochain projections}
%It is known that under certain assumptions, there is guaranteed to exist bounded cochain projections. 
In this section we will prove that there exists bounded cochain projections from the \v{C}ech-de Rham complex to the subcomplexes given by \cref{eq: embedding}.

We have the following two assumptions for the subcomplex $(\Bc, d)$: there exists positive constants $\kappa_1$ and $\kappa_2$ such that 
\begin{subequations}\label{eq: assumpions}
\begin{align}\label{eq: ass1}
\|v\|_\Ac &\leq \kappa_1 \|dv\|_\Ac, \qquad &v \in \mathfrak{Z}^{k,\perp}(\Bc), \\
\|q\|_\Ac &\leq \kappa_2 \|P_{\mathfrak{H}(\Ac)} q\|_\Ac, \qquad &q \in \Hf^k(\Bc). \label{eq: ass2}
\end{align}   
\end{subequations}

Under the assumptions from \cref{eq: ass1} and \cref{eq: ass2}, the existence of bounded cochain projections is guaranteed, as shown in \cite{arnoldFEEC2}.

\begin{theorem}\label{theorem: existence of bcp}
Suppose the assumptions in \cref{eq: assumpions} hold. Then there exists bounded cochain projections $\pi^k: \Ac^k \to \Bc^k$, where $\|\pi^k\|$ is bounded by the constants $\kappa_1$ and $\kappa_2$:
\begin{equation}
\|\pi^k_\Bc v \|_\Ac \leq (1 + \kappa_1 + \kappa_1 \kappa_2) \|v\|_\Ac, \qquad \forall v \in \Ac^k.
\end{equation}
\end{theorem}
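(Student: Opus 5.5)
Given $v \in \Ac^k$, I will build $\pi^k v$ directly from the Hodge decomposition of $v$ in $\Ac^k$, following the construction in \cite{arnoldFEEC2}. Write
\[
v = P_\Bf v + P_\Hf v + P_{\Bf^*} v ,
\]
using the decomposition \cref{eq: hodge decomposition} of the ambient complex. Set $z = P_\Bf v + P_\Hf v \in \Zf^k(\Ac)$ and $w = P_{\Bf^*} v \in \Zf^{k,\perp}(\Ac)$, so that $dw = dv$.

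\textbf{The $\Zf^\perp$ part.} Since $dv \in \Bf^{k+1}(\Ac)$, I first need a preimage of $dv$ in $\Bc^k$. The natural candidate is the unique $w_\Bc \in \Zf^{k,\perp}(\Bc)$ with $dw_\Bc = dv$. For $dv$ to lie in $d\Bc^k$, I will define $\pi$ recursively from the top degree down. The top-degree map is the $A$-orthogonal projection onto $\Bc^{k_{\max}}$, which is compatible because $d$ vanishes there. At degree $k$, I then take $w_\Bc \in \Zf^{k,\perp}(\Bc)$ with $dw_\Bc = \pi^{k+1} dv$. Assumption \cref{eq: ass1} gives
\[
\|w_\Bc\| \le \kappa_1 \|d w_\Bc\| .
\]
To get the stated constant $1+\kappa_1+\kappa_1\kappa_2$ without compounding bounds across degrees, I will instead organize the argument as in \cite{arnoldFEEC2}. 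The target is a projection onto $\Bc$ that is the identity on $\Bc$ and satisfies $d\pi = \pi d$, with $w_\Bc$ controlled by $\kappa_1\|dv\| \le \kappa_1\|v\|_\Ac$.

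\textbf{The $\Zf$ part.} Here I must map $z$ into $\Zf^k(\Bc) = \Bf^k(\Bc) \oplus \Hf^k(\Bc)$. Assumption \cref{eq: ass2} says that $P_{\Hf(\Ac)}$ restricted to $\Hf^k(\Bc)$ is injective and bounded below. I will combine it with the standard gap argument, namely that the cohomology map induced by the inclusion is an isomorphism under \cref{eq: ass1}. The harmonic component $P_\Hf v$ is then represented by some $q_\Bc \in \Hf^k(\Bc)$ with
\[
\|q_\Bc\| \le \kappa_2 \|P_\Hf v\| ,
\]
modulo a coboundary. The $\Bf$-component $P_\Bf v = d\sigma$ is handled by applying the already defined lower-degree construction to $\sigma$, which yields an element of $\Bf^k(\Bc)$. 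Its norm is again controlled through \cref{eq: ass1}, and this is where the product term $\kappa_1\kappa_2$ arises.

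\textbf{Verification and main obstacle.} I then set $\pi^k v = w_\Bc + (\text{image of } z)$ and check three things. First, idempotence on $\Bc$: if $v\in\Bc^k$, each piece reproduces itself by uniqueness of the preimages in $\Zf^{k,\perp}(\Bc)$. Second, commutation: $d\pi^k v = dw_\Bc = \pi^{k+1} dv$. Third, the norm bound, obtained by summing the three contributions $\|v\|$, $\kappa_1\|v\|_\Ac$ and $\kappa_1\kappa_2\|v\|_\Ac$. The main obstacle is making the $\Zf$ part genuinely linear, idempotent and commuting simultaneously. Naive choices tend to break idempotence on $\Bf^k(\Bc)$ or fail to commute. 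My first attempt is to define $\pi^k$ on $\Zf^k(\Ac)$ as the composition of $P_{\Zf(\Bc)}$ with a correction by the inverse of $P_{\Hf(\Ac)}|_{\Hf(\Bc)}$. If that fails, I will fall back on the smoothed-projection and Schöberl-type construction cited from \cite{arnoldFEEC2}, which is invoked as a black box to guarantee existence with these constants.
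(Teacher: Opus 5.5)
There is a genuine gap: the proposal never produces a working construction, and the two mechanisms you do commit to fail.

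\textbf{The top-down recursion.} Suppose $\pi^N$ is the $A$-orthogonal projection onto $\Bc^N=\Bf^N(\Bc)\oplus\Hf^N(\Bc)$. For $u\in\Ac^{N-1}$, the element $\pi^N du$ then contains the component $P_{\Hf(\Bc)}du$. This component is generally nonzero. Since $\Hf^N(\Bc)\subset\Zf^N(\Ac)=\Bf^N(\Ac)\oplus\Hf^N(\Ac)$, the space $\Hf^N(\Bc)$ is orthogonal to $\Bf^N(\Ac)$ only if $\Hf(\Bc)\subset\Hf(\Ac)$, which fails in general. So $\pi^N du\notin d\Bc^{N-1}$, no $w_\Bc$ with $dw_\Bc=\pi^N du$ exists, and commutation already fails at degree $N-1$. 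In addition, you handle the $\Bf$-part ``by the already defined lower-degree construction'', which recurses in the opposite direction, so the two recursions are circular.

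\textbf{The ansatz itself.} More fundamentally, consider $\pi^k v=w_\Bc+\Phi(z)$, where $w_\Bc$ is determined by $dv$ and $\Phi$ acts on the ambient cycle part $z=P_{\Zf(\Ac)}v$ alone. This cannot be idempotent whenever $\Zf^{k,\perp}(\Bc)\not\perp\Hf^k(\Ac)$, which happens in general.
Take $z^\perp\in\Zf^{k,\perp}(\Bc)$ with $P_\Hf z^\perp\neq0$. Choose $h\in\Hf^k(\Bc)$ with $P_\Hf h=P_\Hf z^\perp$ (you need $P_\Hf|_{\Hf(\Bc)}$ to be onto anyway), and set $v=z^\perp-h\in\Bc^k$.
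Then $w_\Bc=z^\perp$ and $z=P_{\Bf(\Ac)}(z^\perp-h)\in\Bf^k(\Ac)$.
For $u\in\Ac^{k-1}$, commutation gives $\Phi(du)=\pi^k du=d\pi^{k-1}u$, so $\Phi$ maps $\Bf^k(\Ac)$ into $\Bf^k(\Bc)$. But idempotence demands $\Phi(z)=-h\in\Hf^k(\Bc)\setminus\{0\}$, a contradiction.

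Two smaller points. First, \cref{eq: ass1} does not make the inclusion a cohomology isomorphism. Surjectivity of $P_\Hf|_{\Hf(\Bc)}$ onto $\Hf(\Ac)$ needs separate input, such as the equality of cohomologies. Second, the smoothed-projection fallback is tied to finite element spaces on meshes and does not apply to an abstract subcomplex like $\Ec^\bullet$.

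\textbf{The missing idea (the paper's construction).} It avoids both the recursion and the ambient split, and has three parts.
\begin{itemize}
\item Define $Q_\Bc v\in\Zf^{k,\perp}(\Bc)$ by $dQ_\Bc v=P_{\Bf(\Bc)}dv$. The orthogonal projection of $dv$ onto $\Bf^{k+1}(\Bc)=d\Bc^k$ always has a preimage, and \cref{eq: ass1} gives $\|Q_\Bc v\|_\Ac\le\kappa_1\|v\|_\Ac$.
\item Take $P_{\Bf(\Bc)}v$ itself as the $\Bf(\Bc)$-part. For $u\in\Ac^{k-1}$ this automatically matches the degree below, since $\pi^k du=P_{\Bf(\Bc)}du=dQ_\Bc u$. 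Hence $d\pi=\pi d$ holds with no recursion.
\item Take $R_\Bc P_\Hf(I-Q_\Bc)v$ as the harmonic part, with $R_\Bc=(P_\Hf|_{\Hf(\Bc)})^{-1}$.
\end{itemize}
Subtracting the \emph{discrete} lift $Q_\Bc v$, rather than the ambient $P_{\Bf^*}v$, supplies exactly the cross term $-R_\Bc P_\Hf Q_\Bc v$ that your ansatz lacks. On $v=b+h+z^\perp\in\Bc^k$, the three pieces return $b$, $h$ and $z^\perp$ respectively.

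\textbf{The constant.} The product term comes from this cross term, not from the $\Bf$-part, which is an orthogonal projection with bound $1$. Done carefully, $\|R_\Bc P_\Hf(I-Q_\Bc)v\|\le\kappa_2\|(I-Q_\Bc)v\|\le\kappa_2(1+\kappa_1)\|v\|_\Ac$. Summing the three pieces therefore gives $(1+\kappa_1)(1+\kappa_2)$. The bound $\kappa_1\kappa_2$ for this term, used in your accounting and in the paper's proof, drops a $\kappa_2$.
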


\begin{proof}
The first assumption implies that there exists an operator
\begin{equation}
Q_\Bc: \Ac^k \to \mathfrak{Z}^{k, \perp}(\Bc), \qquad dQ_\Bc v = P_{\mathfrak{B}(\Bc)} dv.
\end{equation}
It follows from \cref{eq: ass1} that $Q_\Bc$ is bounded:
\begin{equation}
\|Q_\Bc v\|_\Ac \leq \kappa_1 \|d Q_\Bc v\|  = \kappa_1\| P_{\mathfrak{B}(\Bc)} dv \| \leq \kappa_1
\|v\|_\Ac.
\end{equation}

Next, we consider the orthogonal projection $P_\Hf: \Ac^k \to \Hf^k$. When restricting the domain of the projection to $\Hf(\Bc)$, it defines a map $P_\Hf|_{\Hf(\Bc)}: \Hf^k(\Bc) \to \Hf^k$. We denote the inverse of this map by $R_\Bc: \Hf \to \Hf(\Bc)$, which exists and is bounded because of \cref{eq: ass2}.

We now consider the following map:
\begin{equation}\label{eq: bounded cochain projection Q and R}
\pi_\Bc^k = P_{\mathfrak{B}(\Bc)} + R_\Bc P_{\mathfrak{H}}(I - Q_\Bc) + Q_\Bc. 
\end{equation}
We claim that \cref{eq: bounded cochain projection Q and R} defines a bounded cochain projection. For this to be true, we need to verify the following three properties for \cref{eq: bounded cochain projection Q and R}: 
\begin{enumerate}
    \item $\pi_\Bc^k$ is a cochain map
    \item $\pi_\Bc^k$ is bounded
    \item $\pi_\Bc^k$ is a projection
\end{enumerate}
Firstly, for $\pi^k_\Bc$ to be a cochain map it satisfies $d\pi^k_\Bc v = \pi^{k+1}_\Bc dv$. Using the Hodge decomposition, we can decompose $v = du + q + d^*w$.

On one hand, 
\begin{equation}
d\pi_\Bc^{k+1} v = dP_{\Bf_\Bc} v + d(R_\Bc P_\Hf (I - Q_\Bc ) v) + dQ_\Bc v.
\end{equation}
The first two terms are zero, since $P_{\Bf_\Bc} v = du_\Bc$ and thus $dP_{\Bf_\Bc} v= ddu_\Bc = 0$, and similarly $dq_\Bc = 0$.

On the other hand, 
\begin{equation}
\pi_\Bc^k dv = P_{\Bf_\Bc} dv + (R_\Bc P_\Hf (I - Q_\Bc) dv) + Q_\Bc dv.
\end{equation}
In this expression, the second term and the third term are zero. The second term is zero since $P_\Hf dv = 0$, while the last term is zero since $Q_\Bc: \Ac^k \to \Zf^{k,\perp}(\Bc)$, but $dv \in \Bf^k \subset \Zf^k$, hence $Q_h dv = 0$. By definition of $Q_\Bc$, $dQ_\Bc v = P_{\Bf_\Bc} dv$. The only non-zero terms coincide, and thus we conclude that $\pi_\Bc^{k+1} dv = d\pi^k_\Bc v$.

Secondly, $\pi^k_\Bc$ is bounded because both $Q_\Bc$ and $R_\Bc$ are bounded, and each orthogonal projection $P_K$ is bounded with bound 1.

\begin{subequations}
\begin{align}
\| \pi_\Bc^k v\|_\Ac &= \|(P_{\mathfrak{B}(\Bc)} + R_\Bc P_{\mathfrak{H}}(I - Q_\Bc) + Q_\Bc)v \|_\Ac \\
&\leq  \| P_{\mathfrak{B}(\Bc)}v \|_\Ac + \| R_\Bc P_{\mathfrak{H}}(I - Q_\Bc)v \|_\Ac + \| Q_\Bc v \|_\Ac \\ 
&\leq \|v\|_\Ac + \kappa_1 \kappa_2 \|v\|_\Ac + \kappa_1 \|v\|_\Ac = (1 + \kappa_1 + \kappa_1 \kappa_2) \|v\|_\Ac.
\end{align}
\end{subequations}

If $v_\Bc = du_\Bc + q_\Bc + d^*_\Bc w_\Bc \in \Bc^k$, then $Q_\Bc v_\Bc = P_{\Bf^*(\Bc)} v_\Bc$ and $R_\Bc P_{\Hf} v_\Bc = P_{\Hf(\Bc)} v_\Bc$. From this we see that the map $\pi_\Bc^k$ is invariant on $v_\Bc \in \Bc^k$: 
\begin{equation}
\pi_\Bc^k v_\Bc = du_\Bc + q_\Bc + d^*_\Bc w_\Bc = v_\Bc.
\end{equation}
This invariance proves the final claim that $\pi^k$ is a projection. We can therefore conclude that $\pi^k$ as defined in \cref{eq: bounded cochain projection Q and R} is a bounded cochain projection.
\end{proof}

\Cref{theorem: existence of bcp} guarantees the existence of bounded cochain projections, given the assumptions \cref{eq: ass1} and \cref{eq: ass2}.

\subsection{Validity of assumptions} 
The previous section establishes the existence of bounded cochain projections, under the assumptions \cref{eq: ass1} and \cref{eq: ass2}. We will briefly discuss the validity of the assumptions in \cref{eq: ass1} and \cref{eq: ass2} for the \v{C}ech-de Rham complex $(\Cc^\bullet, D)$ and the subcomplex $(\Ec^\bullet, D)$ induced by the cochain map $\Xi^\bullet: \Sc^\bullet \to \Cc^\bullet$.

The cochain map $\Xi^\bullet$ is bounded from above and below: for each $k \in \{0,..., n\}$, there exists positive constants $C_1,C_2$ such that the following inequality holds:
\begin{equation}\label{eq: xi bounded}
C_1 \|v\|_\Sc \leq \|\Xi^k(v)\|_\Cc \leq C_2 \|v\|_\Sc.
\end{equation}

Since the simplicial de Rham complex is a closed Hilbert complex, it admits a Poincaré inequality:
\begin{equation} \label{eq: simplicial poincare inequality}
\|v\| \leq c_{\Sc} \|D_\Sc v\|, \qquad \forall v \in \mathfrak{Z}^{k,\perp}(\Sc).
\end{equation}
Using the inequality in \cref{eq: xi bounded} together with the Poincaré inequality \cref{eq: simplicial poincare inequality}, we get a Poincaré inequality for the subcomplex $\mathcal{E}^k \subset \Cc^k$:
\begin{equation}
\|\Xi^k(v)\| \leq C_2 \|v\| \leq C_2 c_{\Sc} \|D_\Sc v\| \leq \frac{C_2}{C_1} c_{\Sc} \|\Xi^{k+1}(D_\Sc v)\| = \frac{C_2}{C_1} c_{\Sc}  \|D_\Ac \Xi^k(v)\|. 
\end{equation}

It is shown\footnote{The authors postulate that the result holds for any positive integer $n \in \mathbb{N}$} for $n \leq 3$, where $n$ is the maximal degree of the Hilbert complex (which coincides with the maximal dimension of the mixed-dimensional hierarchy), that the mixed-dimensional de Rham complex is quasi-isomorphic to the de Rham complex \cite[Theorem 3.3]{mdG}.

The \v{C}ech-de Rham complex has cohomology which is isomorphic to the de Rham complex, hence the two complexes $\Sc^\bullet$ and $\Cc^\bullet$ have the same cohomology. Moreover, the cochain map $\Xi^\bullet$ induces an isomorphism in cohomology between the mixed-dimensional de Rham complex $\Sc^\bullet$ and its image $\Ec^\bullet$, hence the two Hilbert complexes of interest have the same cohomology, and the assumption \cref{eq: ass2} holds.

We conclude that the assumptions \cref{eq: ass1} and \cref{eq: ass2} are satisfied for the Hilbert complexes $(\Cc^\bullet, D)$ and $(\Ec^\bullet, D)$, thus \cref{theorem: existence of bcp} is valid.

\section{Error estimates}\label{section: error estimates}
Now that we have established both the subcomplex property and the existence of bounded cochain projections, we proceed by formulating \emph{a priori} error estimates and \emph{a posteriori} error estimates between solutions in the full \v{C}ech-de Rham complex and the subspace which is the embedded mixed-dimensional de Rham complex. These error estimates compare the solution to Hodge-Laplace problems on mixed-dimensional geometries to the solution of an equivalent Hodge-Laplace problem on an equidimensional (overlapping) geometry. 

\subsection{Poincaré constants for mixed-dimensional de Rham complex}
The error estimates we produce are dependent on Poincaré constants of the different Hilbert complexes. In this section we will review and discuss the different Poincaré constants that arise in this setting.

The classical Poincaré inequality is usually stated in terms of gradient, and allows us to bound a function by its derivative:
\begin{equation} \label{eq: classical PC ineq}
\|u\|_{L^2(\Omega)} \leq c_P \|\nabla u\|_{L^2(\Omega)}, \qquad \forall u \in H_0^1(\Omega).
\end{equation}
%For a convex domain $\Omega$, the Poincaré constant $c_P$ has an upper bound $\pi^{-1} \mathrm{diam}(\Omega)$.

The classical Poincaré inequality can be generalized in the following way: any closed Hilbert complex $(\Ac^\bullet, d)$ permits a Poincaré-type inequality for each $k$;
\begin{equation}\label{eq: poincare ineq HC}
\|u\| \leq c_{\Ac,k} \|du\|, \qquad u \in \Ac^k \cap \Zf^{k,\perp}.
\end{equation}
Under the assumption that the Hilbert complex is closed, the subspace $\Bf^k$ is a Hilbert space for each $k$, and $d^k$ defines a bounded linear isomorphism $d^k: \Zf^{k,\perp} \to \Bf^k$, which admits a bounded inverse. The Poincaré constant $c_{\Ac,k}$ is the bound for the inverse of $d^k$.

The Poincaré constant can be described by the following infimum:
\begin{equation}\label{eq: pc full}
\frac{1}{c_{\Ac, k}} = \inf_{u \perp \ker D^k } \frac{\|D^ku\|}{\|u\|}.
\end{equation}

We can also consider the Poincaré constant we get when considering a subspace (or more generally a subcomplex) $V^k \subset \Ac^k$:
\begin{equation}\label{eq: pc subspace}
\frac{1}{c_{V,k}} = \inf_{u \in V^k \perp \ker D^k} \frac{\|D^ku\|}{\|u\|}.
\end{equation}
A classical result \cite{payne1960optimal} shows that the Poincaré constant of the gradient operator is bounded by $\pi^{-1} \mathrm{diam}(\Omega)$ for any convex domain $\Omega$. The generalization of this result tells us that all the Poincaré constants in the de Rham complex have the bound $c_k \leq \pi^{-1} \mathrm{diam}(\Omega)$ \cite{pauly2020poincare}.

We also make use of the following result which equates the Poincaré constant of a differential operator $D^k$ with the Poincaré constant associated with its adjoint $D^*_{k+1}$:
\begin{proposition}
\begin{equation}
\frac{1}{c_{\Ac,k}} = \inf_{u \perp \ker D^k} \frac{\|D^ku\|}{\|u\|} =\frac{1}{c_{\Ac, k+1}^*} =  \inf_{v \perp \ker D^*} \frac{\|D^*_{k+1}u\|}{\|u\|}.
\end{equation}
\begin{proof}
See \cite[Lemma 2.2]{pauly2020solution}.
\end{proof}
\end{proposition}

Another type of Poincaré inequality exists when dealing with functions that are not necessarily vanishing on the boundary (and more generally, differential forms that are not orthogonal to the kernel $\Zf^{k}$).

We write $u_\Omega = \frac{1}{|\Omega|}\int_\Omega u$ for the average value of $u \in H\Lambda^k(\Omega)$. Then, we have a Poincaré inequality for any $u \in H\Lambda^k(\Omega)$:
\begin{equation}
\|u - u_\Omega\|\leq c_{k,\Omega} \|d^k u\|.
\end{equation}
The \v{C}ech-de Rham complex, the simplicial de Rham complex and the embedded simplicial de Rham complex all permit a global Poincaré-type inequality as \cref{eq: poincare ineq HC}. As shown earlier, the Poincaré constant for the subcomplex $\Ec^\bullet$, can be expressed in terms of the Poincaré constant for the simplicial de Rham complex $\Sc^\bullet$: 
\begin{equation}
c_{\Ec,k} = \frac{C_2}{C_1} c_{\Sc,k},
\end{equation}
where $C_1$ and $C_2$ are bounds on the embedding map \cref{eq: embedding}. Since the Hilbert complexes we consider are product spaces of the de Rham complex, there are also local Poincaré-type inequalities for each $\Omega_i$ and $U_i$.

\subsection{A priori error estimates}
Now that we have established the embedded simplicial de Rham complex $(\Ec^\bullet, D)$ as a subcomplex of the \v{C}ech-de Rham complex through the embedding $\Xi^\bullet: \Sc^\bullet \to \Cc^\bullet$, and the existence of bounded cochain projections from the full \v{C}ech-de Rham complex onto the subcomplex, we can establish general a priori error estimates as presented in \cite{arnoldFEEC}.

\begin{theorem}[Error estimates for Hodge-Laplace problems]\label{theorem: a priori}
Let $(u, v, q) \in \Cc^{k-1} \times \Cc^{k} \times \Hf^k(\Cc)$ solve the mixed weak formulation of the Hodge-Laplace problem on $(\Cc^\bullet, D)$ and let $(u_\Ec, v_\Ec, q_\Ec) \in \Ec^{k-1} \times \Ec^k \times \Hf^k(\Ec)$ solve the mixed weak formulation of the Hodge-Laplace problem on $(\Ec^\bullet, D_\Ec)$. Then
\begin{subequations}
\label{eq: a priori error estimates}
\begin{align}
\|u - u_\Ec\|_{\Cc} &+ \|v - v_\Ec\|_{\Cc} + \|q - q_\Ec\| \leq \\
 \mathfrak{c} \Big( \inf_{\hat{u} \in \Ec^{k-1}} \|u - \hat{u}\|_\Cc &+ \inf_{\hat{v} \in \Ec^{k}} \|v - \hat{v}\|_\Cc + \inf_{\hat{q} \in \Ec^k} \|q - \hat{q}\|_\Cc  + M_k \inf_{\hat{v} \in \Ec^k} \|P_{\mathfrak{B}}^k v - \hat{v}\|_\Cc \Big),
\end{align}
\end{subequations}
where $\mathfrak{c}$ depends on the Poincaré constant $c_{\Cc,k}$ and $\|\pi^k\|$ (where $\pi^k: \Cc^k \to \Ec^k$ denotes the bounded cochain projection), while the other constant is given by
\begin{equation}\label{eq: bound in harmonic forms}
M_k = \sup_{\substack{\hat{q} \in \Hf^k(\Cc), \\ \|\hat{q} \|=1}} \|(\mathrm{id} - \pi^k) \hat{q} \|. 
\end{equation}
\end{theorem}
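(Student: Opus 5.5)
This is the abstract Galerkin estimate for subcomplexes, Theorem 3.9 in \cite{arnoldFEEC3} (equivalently Theorem 4.9 in \cite{arnoldFEEC}), specialised to $A^\bullet = C^\bullet$ with domain complex $\Cc^\bullet$ and subcomplex $\Ec^\bullet = \Xi^\bullet(\Sc^\bullet)$. The abstract theorem has two hypotheses: $\Ec^\bullet$ must be a subcomplex of the closed complex $\Cc^\bullet$, and there must be a bounded cochain projection $\pi^k:\Cc^k\to\Ec^k$. Both are already in place: the first from \cref{eq: embedding} and the two-sided bound \cref{eq: bounded cochain map}, the second from \cref{theorem: existence of bcp}, whose assumptions were checked in the previous subsection. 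So the plan is to reproduce the proof of the abstract theorem in this notation and track where each constant comes from.

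\textbf{Step 1 (discrete stability).} Write $B(u,v,q;u',v',q')$ for the bilinear form of \cref{eq: Hodge-Laplace mixed weak}. On the full complex, $B$ is inf-sup stable on $\Cc^{k-1}\times\Cc^k\times\Hf^k(\Cc)$, with a constant depending only on $c_{\Cc,k}$. To get the same on $\Ec^{k-1}\times\Ec^k\times\Hf^k(\Ec)$, I need a uniform Poincaré constant for $\Ec$. The bounded cochain projection gives $c_{\Ec,k}\le c_{\Cc,k}\|\pi^k\|$, as stated after the first proposition of \cref{section: bcp}. Repeating the stability proof (choose $u'=u-\tfrac{1}{c^2}\rho$, $v'=v+du+p$, $q'=q-P_{\Hf(\Ec)}v$, where $\rho\in\Zf^{k-1,\perp}(\Ec)$ with $d\rho=P_{\Bf(\Ec)}v$) then gives an inf-sup constant $\gamma_\Ec^{-1}$ depending on $c_{\Cc,k}$ and $\|\pi^k\|$.

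\textbf{Step 2 (Galerkin error splitting).} The discrete space $\Ec^{k-1}\times\Ec^k\times\Hf^k(\Ec)$ is not contained in the continuous one, because $\Hf^k(\Ec)\not\subset\Hf^k(\Cc)$. So the scheme is nonconforming in the harmonic part, and consistency fails by terms involving $q$ and $P_{\Hf(\Ec)}v$.

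Fix arbitrary $\hat u\in\Ec^{k-1}$, $\hat v\in\Ec^k$, $\hat q\in\Ec^k$, and set $\bar q=P_{\Hf(\Ec)}\hat q$. Use discrete inf-sup on $(u_\Ec-\hat u,\,v_\Ec-\hat v,\,q_\Ec-\bar q)$. Subtracting the continuous equations tested with $\Ec$-test functions leaves two residuals:
\begin{itemize}
\item $\langle q-\bar q, v'\rangle$, which is bounded by $\|q-P_{\Hf(\Ec)}\hat q\|\le \|q-\hat q\|+\|\hat q - P_{\Hf(\Ec)}\hat q\|$, together with $\|q-P_{\Hf(\Ec)}q\|$;
\item $\langle v,q'\rangle$ for $q'\in\Hf^k(\Ec)$.
\end{itemize}

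For the first residual, the key estimate is $\|q-P_{\Hf(\Ec)}q\|\le\|(\mathrm{id}-\pi^k)q\|$. It holds because $\pi^k q\in\Zf^k(\Ec)$ and $q\perp \Bf^k(\Ec)$. In fact this term is handled inside $\inf\|q-\hat q\|$ after using $\|(\mathrm{id}-\pi)w\|\le(1+\|\pi\|)\inf\|w-\hat w\|$.

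For the second residual, note that $v\perp\Hf^k(\Cc)$, so $\langle v,q'\rangle=\langle P_\Bf v,q'\rangle+\langle P_{\Bf^*}v,q'\rangle$. The second piece vanishes since $q'\in\Zf^k\perp\Bf^*$. For the first piece, $q'\perp\Bf^k(\Ec)$ lets us subtract any $\hat v\in\Bf^k(\Ec)$. Moreover $q'-P_{\Hf}q' = q'-P_\Hf q'$ is small, since $\|q'-P_\Hf q'\| \le \|(\mathrm{id}-\pi^k)P_\Hf q'\|\le M_k\|q'\|$. The resulting bound is $\langle P_\Bf v,q'\rangle\le M_k\|P_\Bf v-\pi^kP_\Bf v\|\,\|q'\|$, and then the quasi-optimality of $\mathrm{id}-\pi^k$ converts this to $M_k\inf_{\hat v}\|P_{\Bf}^kv-\hat v\|_\Cc$.

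\textbf{Step 3 (assembly).} Apply the triangle inequality $\|u-u_\Ec\|\le\|u-\hat u\|+\|\hat u-u_\Ec\|$, and likewise for $v$ and $q$, then take infima. This yields \cref{eq: a priori error estimates}, with $\mathfrak c$ depending on $\gamma_\Ec$, $\|\pi^k\|$ and $c_{\Cc,k}$.

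The main obstacle is Step 2's treatment of the harmonic forms: showing that the gap between $\Hf^k(\Ec)$ and $\Hf^k(\Cc)$ is controlled exactly by $M_k$ and by best approximation of $P_\Bf v$. This uses the identity $P_{\Hf(\Ec)}\pi^k q = P_{\Hf(\Ec)} q$ and the fact that $\pi^k$ preserves $\Zf$ and $\Bf$. If a direct argument becomes messy, I would instead quote \cite[Theorem 3.9]{arnoldFEEC3} verbatim after checking its hypotheses as above.
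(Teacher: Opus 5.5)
Your proposal is correct and follows the paper's route. The paper gives no independent argument: it invokes the abstract subcomplex error estimate (pointing to \cite[Section 5.2]{arnoldFEEC}, the same result as Theorem 3.9 of \cite{arnoldFEEC3}), with the subcomplex property supplied by the embedding $\Xi^\bullet$ and the bounded cochain projection by \cref{theorem: existence of bcp}. Your sketch of the underlying argument faithfully reproduces that proof: discrete inf-sup via $c_{\Ec,k}\le c_{\Cc,k}\|\pi^k\|$, the nonconforming term $\langle v,q'\rangle$ controlled by $\|q'-P_{\Hf}q'\|\le\|(\mathrm{id}-\pi^k)P_{\Hf}q'\|\le M_k\|q'\|$, and quasi-optimality of $\mathrm{id}-\pi^k$. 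The only blemishes are small typos, such as $v'=v+du+p$ where $p$ should be $q$.
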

We refer to \cite[Section 5.2]{arnoldFEEC} for the proof, where the theorem is proven for an arbitrary closed Hilbert complex $(\Ac^\bullet,d)$, a subcomplex $\Bc^\bullet \subset \Ac^\bullet$, and a bounded cochain projection $\pi^\bullet: \Ac^\bullet \to \Bc^\bullet$. One can obtain improved error estimates if we have a cochain projection that is bounded in $L^2$ norm instead of the graph norm. This is shown in \cite{arnoldFEEC3}. 

For a simply connected domain, the cohomology $\Hf^k(\Cc)$ is zero for $k>0$, and thus the constants $M_k$, as well as the terms $\|q - q_\Ec\|$ and $\inf_{\hat{q} \in \Ec^k }\|q - \hat{q}\|_\Ac$ vanish. Thus, \cref{eq: a priori error estimates} for a $k \neq 0$ Hodge-Laplacian on a simply-connected domain takes a much simpler expression:
\begin{equation}
\|u - u_\Ec \|_{\Cc} + \|v - v_\Ec\|_{\Cc}  \leq c \Big( \inf_{\hat{u} \in \Ec^{k-1}} \|u - \hat{u}\|_\Cc + \inf_{\hat{v} \in \Ec^{k}} \|v - \hat{v}\|_\Cc \Big).
\end{equation}

In finite element exterior calculus, one obtains an approximation property based on the mesh size $h$ for the triangulation of the domain. In particular, one aims to show that in the limit as $h \to 0$, the infimum tends towards zero: $\inf\limits_{u_h \in V_h} \|u - u_h\|_V \to 0$.  

We want to express a similar type of approximation property, but instead of considering the limit as the mesh size approaches zero, we consider a family of open covers $\{\Uc^\epsilon\}_{\epsilon \in (0,\epsilon_M)}$ and take the limit as the maximal diameter of overlaps goes towards zero, as illustrated in \cref{fig: parameterized open cover}.

\begin{figure}[htb]
    \centering
    \includegraphics[width=1.0\linewidth]{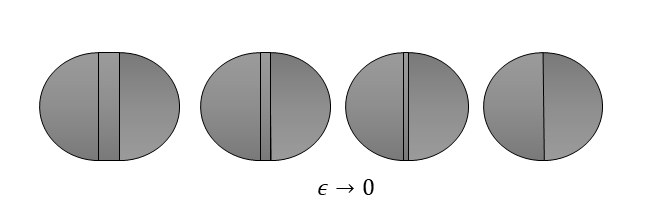}
    \caption{Three left domains: A simple open cover consisting of two open sets, with diameter in the transversal direction equal to a parameter $\epsilon$. Rightmost domain: When $\epsilon \to 0$, we get a mixed-dimensional geometry.}
    \label{fig: parameterized open cover}
\end{figure}

\begin{proposition}\label{proposition: liminf}
Let $\Uc_{\epsilon}$ be a family of \v{C}ech cover associated to a simplicial geometry, where the subscript $\epsilon$ denotes the maximal diameter in the transversal direction.

Given $u \in \Cc^k$, 
\begin{equation}\label{eq: liminf}
%\lim\limits_{\epsilon \to 0} 
\inf\limits_{u_\Ec \in \Ec^k} \|u - u_\Ec \|_\Cc = \mathcal{O}(\epsilon).
\end{equation}
\end{proposition}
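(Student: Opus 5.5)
The plan is to bound the infimum by a concrete competitor. For a given $u\in\Cc^k$ I will build an explicit element of $\Ec^k$: first produce a mixed-dimensional representative $s\in\Sc^k$ from $u$, then take $u_\Ec=\Xi^k(s)$. The estimate $\|u-\Xi^k(s)\|_\Cc=\mathcal O(\epsilon)$ will then be checked componentwise on each $U_i$, $i\in\Ic^p$, using the bigraded structure. Since everything is a finite sum over multi-indices, it is enough to bound each component of $u-\Xi^k(s)$ in $L^2\Lambda^q(U_i)$ and each component of $D(u-\Xi^k(s))$ in $L^2\Lambda^{q+1}(U_i)$.

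\textbf{Step 1: the competitor.} For $p=0$ I set $s_{i}=u_i|_{\Omega_i}$ for $i\in\Ic$. For $p\geq 1$, $U_i$ is a tubular neighbourhood of $\Omega_i$ of transversal width at most $\epsilon$. There I choose local coordinates $U_i\cong\Omega_i\times(-\epsilon,\epsilon)^p$ and let $s_i$ be the transversal average of the tangential part of $u_i$, i.e.\ the fibrewise mean of the pull-back onto the slices parallel to $\Omega_i$. Recall from \cite{holmen2024injective} that $\Xi$ extends $\Sc$-forms from $\Omega_i$ into $U_i$ essentially constantly in the transversal direction, with the construction arranged so that it commutes with $D$. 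Consequently, $u_i-\Xi(s)_i$ is, up to the cochain correction terms built into $\Xi$, the deviation of $u_i$ from its transversal mean.

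\textbf{Step 2: local estimates.} On each fibre of length at most $\epsilon$ I apply a one-dimensional Poincaré inequality of the type $\|w-w_\Omega\|\le c\,\mathrm{diam}\,\|dw\|$. Recall from \cite{payne1960optimal,pauly2020poincare} that the constant scales like $\pi^{-1}\mathrm{diam}$, and here the diameter is $\epsilon$. Integrating over $\Omega_i$ gives
\[
\|u_i-\Xi(s)_i\|_{L^2(U_i)}\le \frac{\epsilon}{\pi}\,\|\partial_\perp u_i\|_{L^2(U_i)} .
\]
The transversal derivative $\partial_\perp u_i$ is controlled by $d u_i$ together with the tangential components. For the graph-norm part I use the cochain property $D\Xi=\Xi D_\Sc$ and the fact that transversal averaging commutes with tangential $d$. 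This lets me treat $D(u-\Xi(s))$ by the same argument applied to $Du$. Summing over $i$ and $(p,q)$ then gives the claimed $\mathcal O(\epsilon)$.

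\textbf{Main obstacle.} The critical point is that the argument needs transversal derivatives of $u$, and of $Du$, to be bounded uniformly in $\epsilon$. A general element of $\Cc^k$ only has $d u$ in $L^2$, and the normal components of $u_i$ are not controlled by averaging. I expect to have to add a regularity hypothesis, either $u$ restricted from a fixed smooth (or $H^1$, with $Du\in H^1$) family on a fixed neighbourhood, or $u$ being the solution of the Hodge-Laplace problem with smooth data. Without such a hypothesis the rate can fail, so I would state the $\mathcal O(\epsilon)$ constant as proportional to $\|u\|_{H^1}+\|Du\|_{H^1}$.

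A second delicacy is the weight $\epsilon^{-p}$ in \cref{eq: weighted cdr inner products}. If the weighted norm is used, the volume factor $|U_i|\sim\epsilon^p|\Omega_i|$ cancels this weight, so the fibre Poincaré estimate still produces exactly one factor of $\epsilon$. I will verify this bookkeeping explicitly, since in the unweighted norm the rate is $\epsilon^{1+p/2}$ on codimension-$p$ pieces.
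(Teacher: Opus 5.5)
There is a genuine gap. It is in the graph-norm step, where you assert that the cochain property lets you treat $D(u-\Xi(s))$ \emph{by the same argument applied to $Du$}.

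A minor problem first. Your map $u\mapsto s$ (restriction for $p=0$, fibrewise average for $p\ge1$) commutes with tangential $d$ but not with $\delta$. On $U_i$, $i\in\Ic^{p+1}$, the fibre average of $(\delta u)_i$ is not $\sum_j(-1)^{k+j}\tr s_{i\setminus j}$; in the paper's one-dimensional example you would be comparing the mean of $u_1-u_0$ over $(-\epsilon,\epsilon)$ with $u_1(0)-u_0(0)$. So $D(u-\Xi(s))=Du-\Xi(D_\Sc s)$ is not the fibre deviation of $Du$. Under your regularity hypothesis this commutator is small, so this part is repairable.

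The decisive problem is the range of $\Xi$. By your own description, and by the explicit map in the paper's example, every element of $\Ec$ is, on each overlap, transversally constant and tangential, being a pull-back from the lower-dimensional piece. Consider any component of $u_i$ or of $(Du)_i$ that carries a transversal leg: the normal components of $u_i$, and the derivatives $\partial_\perp$ of its tangential coefficients that enter $du_i$. Such a component is pure error. Averaging does not act on it, and no choice of $s$ reduces it. Smoothness only makes it pointwise bounded. It therefore contributes of order $\epsilon^{1/2}$ on the overlap strips of the $p=0$ patches (unit weight, width $\sim\epsilon$), and of order one on the $p\ge1$ patches once the weight $\epsilon^{-p}$ is included. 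Your remark that normal components are not controlled by averaging points at this. But the obstruction comes from the range of $\Xi$, not from a lack of regularity, so neither an $H^1$ hypothesis nor a constant proportional to $\|u\|_{H^1}+\|Du\|_{H^1}$ removes it.

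You can verify this in the paper's example.
For $k=0$, let $u=(u_0,u_1)$ be the restriction of fixed smooth functions with $u_0'(0)\neq0$. Every $u_\Ec\in\Ec^0$ has $u_{\Ec,0}$ constant on $(-\epsilon,\epsilon)$, since $\Xi$ there is the constant extension of the trace. Hence $\|u-u_\Ec\|_\Cc\ge\|d(u_0-u_{\Ec,0})\|_{L^2(-\epsilon,\epsilon)}=\|u_0'\|_{L^2(-\epsilon,\epsilon)}\approx\sqrt{2\epsilon}\,|u_0'(0)|$ (up to the weight $w_0$) for every $u_\Ec$. This is consistent with the rate $1/2$ for $e^\sigma$ in the table.
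For $k=1$ it is worse. Here $\Sc^2=0$, so $\Ec^2=0$ and $Du_\Ec=0$ for all $u_\Ec\in\Ec^1$. For $v=(v_0,v_1,v_{01})=(0,0,x)$ this gives $\|v-u_\Ec\|_\Cc\ge\|Dv\|_{\Cc^2}=\epsilon^{-1/2}\|dx\|_{L^2(-\epsilon,\epsilon)}=\sqrt2$ in the $\epsilon$-weighted norm, and $\sqrt{2\epsilon}$ unweighted.

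So the statement itself has to be weakened, not merely supplemented by a hypothesis. What your fibrewise Poincaré argument does prove, under uniform regularity, is an $\mathcal{O}(\epsilon)$ bound for the $L^2$ part of the tangential components on the $p\ge1$ patches, where $\epsilon^{-p}$ indeed cancels against $|U_i|\sim\epsilon^p$. Even that needs a separate estimate on the $p=0$ patches: with $s_i=u_i|_{\Omega_i}$, the map $\Xi$ reproduces $u_i$ neither on $\tilde U_i$ (where the dilation $\phi_i$ enters) nor on the overlap strips.

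The paper's own argument does not supply the missing step either. It uses the patchwise mean $|U_i|^{-1}\int_{U_i}u_i$, whose Poincaré constant is of order $\epsilon$ only when $\Omega_i$ is a point. It sets aside what it calls the \emph{recursive} $(p+1,q)$ contributions. It relies on the tacit scaling $\|du_i\|=\mathcal{O}(\epsilon^{p/2})$, which is exactly the regularity you make explicit. But with a constant competitor one has $d(u_i-u_{\Ec,i})=du_i$, whose weighted norm $\epsilon^{-p/2}\|du_i\|$ is of order one under that same scaling. Your competitor, built through $\Xi$ from fibrewise averages, is the better construction for the $L^2$ part. Neither argument controls the derivative part of the graph norm.
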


\begin{proof}
We consider general case when the inner products for the \v{C}ech-de Rham complex are weighted by the parameter $\epsilon$, as in \cref{eq: weighted cdr inner products}. Unweighted inner products for the \v{C}ech-de Rham complex can be considered a special case, and the result for unweighted inner products follows immediately. 

For the $\epsilon$-weighted inner products, the expression in \cref{eq: liminf} becomes the following:
\begin{equation}
 \|u - u_\Ec \|_\Cc = \sum_{p+q=k} \sum_{i \in \Ic^p} \epsilon^{-p/2} \|u_i - u_{\Ec,i}\|_D.
\end{equation}
The recursive norm for a differential form of degree $(p,q)$ will give contributions to subdomains corresponding to $(p+1,q)$. Thus, some domains will have contributions multiple (but finitely many) times. We will show the desired result for a fixed $(p,q)$, and ignore the recursive norm.

For $p=0$, we have unit weights ($\epsilon^{-0}=1$), and since the subcomplex imposes no restriction on the differential forms on the non-overlapping parts of $U_i$, the difference 
$u_i - u_{\Ec,i}$ vanishes on $\Tilde{U}_i$. Moreover, the difference on the overlapping parts of $U_i$ goes towards zero in the limit, since the measure of the overlaps tends towards zero with order $\epsilon^{p/2}$.

For $p \neq 0$, we have a weight $\epsilon^{-p/2}$, and we are considering domains $U_i$, $i \in \Ic^p$ with $p$ transversal directions of length $\epsilon$. We can set $u_\Ec$ to be equal to the mean value of $u$:
\begin{equation}
u_{\Ec,i}= \frac{1}{|U_i|}\int_{U_i} u_i.
\end{equation}

Using the Poincaré inequality,
\begin{equation}\label{eq: proposition inequality}
\epsilon^{-p/2} \|u_i - u_{\Ec,i}\|_D \leq \epsilon^{-p/2} c_{\Ac,k} \|du_i\|.
\end{equation}
Since the difference $u_i - u_{\Ec,i}$ is bounded in the tangential direction, the Poincaré constant is proportional to $\epsilon$ (and $c_{\Ac,k} \leq \frac{\sqrt{n}\epsilon}{\pi}$). Each $u_i \in H\Lambda^q(U_i)$, hence the terms $\|du_i\|$ are finite, and of order $\epsilon^{p/2}$.

The Poincaré constant together with the term $\|du_i\|$ has order $\epsilon^{p/2+1}$, and accounting for the weight $\epsilon^{-p/2}$, we still have a right-hand side in \cref{eq: proposition inequality} that is of order $\epsilon$. We therefore conclude that 
$\lim\limits_{\epsilon \to 0} \inf\limits_{u_\Ec \in \Ec^k} \|u - u_\Ec \|_\Cc = 0$ with order $\epsilon$.
\end{proof}

\subsection{A posteriori error estimates} \label{subsection: a posteriori}
In \cite{pauly2020solution}, the author analyses general solution theory and a posteriori error estimates to first order systems of the type
\begin{subequations} \label{eq: 1st order system PAULY}
\begin{align} 
D^ku &= f, \\
D_k^*u &= g, \\
\pi_\Hf u &= h.
\end{align}
\end{subequations}

The framework is further extended to second order systems of the type
\begin{subequations} \label{eq: 2nd order system 1}
\begin{align} 
D^*_{k+1} D^ku &= f, \\
D_k^*u &= g, \\
\pi_\Hf u &= h.
\end{align}
\end{subequations}

By writing $D^k u = v$, we can write \cref{eq: 2nd order system 1} as a first order system of six equations:
\begin{subequations}
\begin{align}
D^ku &= v, \qquad & D^{k+1} v &= 0, \\
D_k^*u &= g, \qquad &D^*_{k+1} v &= f, \\
\pi_\Hf u &= h, \qquad &\pi_\Hf v &= 0. \\
\end{align}
\end{subequations}
Thus, a posteriori error estimates for a first order system like \cref{eq: 1st order system PAULY} can be extended to a system like \cref{eq: 2nd order system 1}. Likewise, a second order system of the type
\begin{subequations}  \label{eq: second order system 2}
\begin{align} 
D^*_{k+1} D^ku &= f, \\
D^{k-1}D_k^*u &= g, \\
\pi_\Hf u &= h,
\end{align}
\end{subequations}
can be written as a first order system
\begin{subequations} \label{eq: second order system 2 firstO}
\begin{align} 
D^ku &= v, \qquad &D^{k+1} v &= 0, \qquad &D^{k-1}q &= g, \\
D_k^*u &= q, \qquad &D^*_{k+1} v &= f, \qquad &D^*_{k-1}q &= 0, \\
\pi_\Hf u &= h, \qquad &\pi_\Hf v &= 0, \qquad &\pi_\Hf q &= 0. \end{align}
\end{subequations}

The Hodge-Laplacian for $k=0$ can readily be identified with \cref{eq: 2nd order system 1}, with $g = 0$ and $h = 0$. Similarly, the Hodge-Laplacian for $k=n$ can be identified with the similar system, but with the roles of $D$ and $D^*$ reversed. If the given initial data to the Hodge-Laplace problem is either entirely in $\Bf^k$ or $\Bf^*_k$, then the same identifications can be made for Hodge-Laplace problems for arbitrary $k$.

For a general $k$-Hodge-Laplace problem, we can identify it with \cref{eq: second order system 2}, where $f$ and $g$ is the Hodge decomposition of the initial data for the Hodge-Laplace problem, and $h =0$ as before.

The a posteriori error estimates we present herein are a special case of the ones presented in \cite{pauly2020solution}. In order to apply the a posteriori theory presented in their work, we require the Hodge-Laplace problem to satisfy one of the following:
\begin{enumerate}
    \item The Hodge-Laplace problem is a $k=0$ problem
    \item The Hodge-Laplace problem is a $k=n$ problem
    \item The Hodge-Laplace problem is a $\Bf^*$-problem
    \item The Hodge-Laplace problem is a $\Bf$-problem
    \item The Hodge decomposition of the right-hand side is known
\end{enumerate}

We now state the results from \cite[Theorem 4.7]{pauly2020solution} applied to our framework when assumption (3) above is satisfied, e.g. for a $\Bf^*$-problem, with the understanding that similar estimates can be obtained for $\Bf$-problems by reversing the roles of $D$ and $D^*$, and estimates for general $k$-Hodge-Laplace problems can be obtained whenever the Hodge decomposition is known by considering the system in \cref{eq: second order system 2 firstO}.

\begin{theorem}\label{theorem: a posteriori}
Let $(u, v) \in \Cc^k$ be an exact solution to the Hodge-Laplace $\Bf^*$-problem on the \v{C}ech-de Rham complex, and let $(u_\Ec, v_\Ec)$ be an approximation of $(u, v)$ derived from the Hodge-Laplace $\Bf^*$-problem on $\Sc^k$.

The Hodge-Laplace $\Bf^*$-problem can be identified with a first order system 
\begin{subequations}
\begin{align}
D^k u = v, \qquad  D^{k+1}v=0,\\
D^*_{k} u = 0, \qquad  D_{k+1}^*v = f, \\
\pi^k_\Hf u =0, \qquad \pi^{k+1}_\Hf v=0.
\end{align}
\end{subequations}

Then the errors $e^u = u - u_\Ec$ and $e^v = v - v_\Ec$ decompose according to the Hodge decomposition:
\begin{align}
 e^u &= e^u_\Bf + e^u_\Hf + e^u_{\Bf^*} \in \Bf^k \oplus \Hf^k \oplus \Bf^*_k,   \\
 \|e^u\|^2_{\Cc^k} &= \| e^u_\Bf\|^2_{\Cc^k} + \|e^u_\Hf\|^2_{\Cc^k} + \|e^u_{\Bf^*}\|^2_{\Cc^k},
\end{align}
and 
\begin{align}
 e^v &= e^v_\Bf + e^v_\Hf + e^v_{\Bf^*} \in \Bf^{k+1} \oplus \Hf^{k+1} \oplus \Bf^*_{k+1},   \\
 \|e^v\|^2_{\Cc^{k+1}} &= \| e^v_\Bf\|^2_{\Cc^{k+1}} + \|e^v_\Hf\|^2_{\Cc^{k+1}} + \|e^v_{\Bf^*}\|^2_{\Cc^{k+1}},
\end{align}

The following a posteriori error estimates for the respective error parts hold:
\begin{enumerate}
\item 
\begin{equation}
\|e^u_{\Bf^*}\|^2_{\Cc^k} \leq \left( c_{k}^2 \|D^*_{k+1} D^k u_\Ec - f\|_{\Cc^{k}} \right)^2.
\end{equation}
\item 
\begin{equation}\label{eq: error of u_H}
\|e^u_{\Hf}\|^2_{\Cc^k} = \min_{\varphi \in \dom(D^{k-1})} \min_{\phi \in \dom(D^*_{k+1})} \| D^{k-1}\varphi + D^*_{k+1}\phi - u_\Ec \|_{\Cc^k}^2.
\end{equation}
\item 
\begin{equation}
\|e^v_\Bf\|_{\Cc^{k+1}}^2 \leq \left( c_k \|D^*_{k+1}v_\Ec - f\|_{\Cc^k} \right)^2.
\end{equation}
\end{enumerate}
For the $\Bf^*$-problem the error $e^u_\Bf = 0$ since $u \in \Bf^*_k$.
Moreover, for any Hodge-Laplace problem, $e^v_\Hf$ and $e^v_{\Bf^*}$ are zero because $e^v = v - v_\Ec = Du - Du_\Ec \in \Bf^{k+1}$, hence orthogonal to both $\Bf^*_{k+1}$ and $\Hf^{k+1}$.
\end{theorem}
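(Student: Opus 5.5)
The proof is a specialization of the abstract a posteriori theory of \cite[Theorem 4.7]{pauly2020solution} to the closed Hilbert complex $(\Cc^\bullet, D)$. The only inputs needed are the Hodge decomposition \cref{eq: hodge decomposition} and the Poincaré inequalities \cref{eq: poincare ineq HC}, together with the equality of the Poincaré constants of $D^k$ and $D^*_{k+1}$ from the preceding proposition.

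Since $u_\Ec, v_\Ec \in \Ec^\bullet \subset \Cc^\bullet$, the errors $e^u, e^v$ lie in the domain complex. We decompose them orthogonally by \cref{eq: hodge decomposition}, and the norm identities follow from Pythagoras. Two components vanish immediately:
\begin{itemize}
\item $e^v_\Hf = e^v_{\Bf^*} = 0$, because $v = D u$ and $v_\Ec = D u_\Ec$, so $e^v \in \Bf^{k+1}$.
\item $e^u_\Bf = 0$, because $u \in \Bf^*_k$ for the $\Bf^*$-problem.
\end{itemize}
If $u_\Ec$ itself has a $\Bf$-part, it is absorbed into the term estimated in (2). This bookkeeping is routine.

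For (1) and (3) we use the duality argument of \cite{pauly2020solution}. Since $e^u_{\Bf^*} \in \Bf^*_k = \Zf^{k,\perp}$, the Poincaré inequality gives $\|e^u_{\Bf^*}\| \leq c_k \|D^k e^u_{\Bf^*}\| = c_k \|D^k e^u\|$. The quantity $D^k e^u = e^v$ lies in $\Bf^{k+1} = \mathfrak{Z}^{*,\perp}_{k+1}$, so the adjoint Poincaré inequality, with the same constant $c_k$, gives $\|e^v\| \leq c_k \|D^*_{k+1} e^v\|$. The exact solution satisfies $D^*_{k+1} v = f$, hence $D^*_{k+1} e^v = f - D^*_{k+1} v_\Ec$. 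This yields (3), and composing the two inequalities yields (1) with $c_k^2$. Equivalently, one tests $\langle e^v, D^k\phi\rangle = \langle f - D^*_{k+1}v_\Ec, \phi\rangle$ against $\phi \in \Zf^{k,\perp}$ and takes the supremum, avoiding any regularity assumption on $e^v$.

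For (2), $e^u_\Hf = -P_\Hf u_\Ec$, since $P_\Hf u = 0$. The distance from $u_\Ec$ to the closed subspace $\Bf^k \oplus \Bf^*_k = \Hf^{k,\perp}$ equals $\|P_\Hf u_\Ec\|$, and closedness of the complex makes this distance a minimum. This gives \cref{eq: error of u_H}.

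The main obstacle is justifying that $D^*_{k+1} v_\Ec$ and $D^*_{k+1} D^k u_\Ec$ make sense in $\Cc^k$. The approximation comes from the subcomplex, and in general $D^*_\Ec \neq D^*|_\Ec$. I would handle this by interpreting these residuals through the duality pairing above, bounding a dual-norm residual, or by assuming the extra regularity $v_\Ec \in \dom(D^*_{k+1})$ in $\Cc$ as \cite{pauly2020solution} does for conforming approximations.
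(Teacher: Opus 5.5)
Your overall route is the paper's. The paper gives no argument beyond invoking \cite[Theorem 4.7]{pauly2020solution}, and your derivations of (1)--(3) and of $e^v_\Hf = e^v_{\Bf^*} = 0$ are correct. They rely on the conformity assumption $v_\Ec \in \dom(D^*_{k+1})$ in $\Cc$, which you rightly flag and which the statement leaves implicit.

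Your duality variant does not actually remove that assumption. Writing $\langle f - D^*_{k+1} v_\Ec, \phi\rangle$ already presupposes $v_\Ec \in \dom(D^*_{k+1})$, and then $e^v \in \dom(D^*_{k+1})$ anyway. Without it you get either a dual-norm residual or Pauly's form $\|e^v\| \le c_k \|f - D^*_{k+1} y\| + \|y - v_\Ec\|$ for arbitrary $y \in \dom(D^*_{k+1})$. The smoothed fields $\hat\sigma, \tilde\sigma$ in the rods example play the role of such a $y$.

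The step that fails is your bookkeeping for $e^u_\Bf$. Since $P_\Bf u = 0$, you have $e^u_\Bf = -P_\Bf u_\Ec$. This cannot be ``absorbed into (2)'': by your own argument the right-hand side of (2) equals exactly $\|P_\Hf u_\Ec\|^2$, which is blind to the component $P_\Bf u_\Ec$.

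Nor does that component vanish in general. The approximation $u_\Ec$ is co-closed only relative to the subcomplex: it is orthogonal to $\Zf^k(\Ec)$, or, if computed on $\Sc$ and then embedded, orthogonal to $\Zf^k(\Sc)$ in the $\Sc$ inner product. It is not orthogonal to $\Bf^k(\Cc)$. This is the same phenomenon $D^*_\Ec \neq D^*|_\Ec$ that you raise at the end, just as discretely co-closed finite element forms are not co-closed.

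Hence $e^u_\Bf = 0$ holds automatically only for $k = 0$, where $\Bf^0 = 0$ (this covers the rods example), or under the extra hypothesis $u_\Ec \perp \Bf^k(\Cc)$. The statement's own justification ``since $u \in \Bf^*_k$'' has the same defect.

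For general $k$, the correct application of Pauly's theorem uses the remaining equation $D^*_k u = 0$ of the first-order system, exactly as you used $D^*_{k+1} v = f$. We have $e^u_\Bf \in \Bf^k = \Zf^{*,\perp}_k$ and $D^*_k e^u = D^*_k e^u_\Bf = -D^*_k u_\Ec$. The adjoint Poincaré inequality, with constant $c_{k-1}$ by the proposition preceding the theorem, then gives $\|e^u_\Bf\| \le c_{k-1} \|D^*_k u_\Ec\|$. This assumes $u_\Ec \in \dom(D^*_k)$; otherwise one uses an auxiliary field as above. That bound, not $e^u_\Bf = 0$, is what you can actually prove.
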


The error in harmonic form can be estimated by the gap
\begin{equation}
\mathrm{gap}(\Hf^k(A),\Hf^k(B)) \leq M_k,
\end{equation}
where $M_k$ is the constant which appears in \cref{eq: bound in harmonic forms}. 

\begin{remark}
Note that the expressions depend on the Poincaré constant $c_k$. Since the Poincaré constant is derived as the infimum over the function space, if we restrict to a smaller subcomplex $V^k \subset A^k$, we have that $\frac{1}{c_k} \leq \frac{1}{c_{V,k}}$, and therefore $c_{V,k} \leq c_k$. We observe that a smaller subspace (or more generally subcomplex) will yield a smaller (hence better) Poincaré constant, and in turn better error estimates. Such subspaces are discussed in \cite{varela2023posteriori}; where the authors consider three cases: no conservation (full complex), subdomain mass conservation (which allows for local Poincaré constants for domains $U_i$) and exact mass conservation (where any constant is permissible).
\end{remark}

%\begin{proposition}
%For the $k=0$ Hodge-Laplace problem, $\|e_\Bf\|$ and $\|e_{\Hf}\|$ vanishes.
%\end{proposition}
%\begin{proof}
%Since we are at the left end of the complex, where $\Bf^0 = 0$ for both $\Cc$ and $\Ec$, hence $\|e_\Bf\| = 0$. 
%By the same argument, $\varphi = 0$ in \cref{eq: error of u_H}. We are left with 
%\begin{equation}
% \min_{\phi \in \dom(D^*_{k+1})} \|D^*_{k+1}\phi  -u_\Ec \|_{\Cc^k}^2.
%\end{equation}
%\end{proof}
%
%
%
%
%
%
%
%
%
%
%
%
%
%
%
%

\section{Examples}\label{sec:examples}
\subsection{One-dimensional elastically joined rods}
In this section we consider two elastic rods that are joined together. We simplify the example considerably by treating the rods as one-dimensional. We consider two different geometrical descriptions for the mathematical models: an equidimensional model where the overlap of the two rods have non-zero length, and the mixed-dimensional model wherein the overlap is represented by a zero-dimensional interface. Such models for one-dimensional elasticity appear in literature, see e.g. \cite{mejri2021carleman}.

Displacement and stress of the joined rods are governed by the Hodge-Laplace problem. We will compare the two associated problems: the Hodge-Laplace problem on the full \v{C}ech-de Rham complex and the Hodge-Laplace problem on the simplicial de Rham complex, embedded by the cochain map constructed in \cite{holmen2024injective}.

\begin{figure}[htb]
    \centering
    \includegraphics[width=1.0\linewidth]{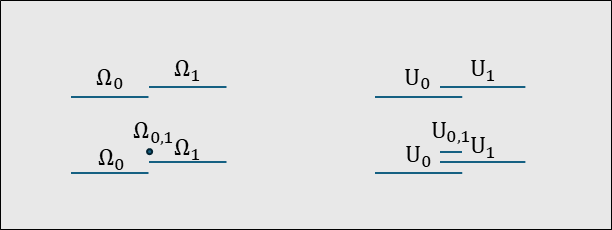}
    \caption{The mixed-dimensional geometry consists of two adjacent intervals, while the \v{C}ech open cover consists of two overlapping intervals.
    The Hodge-Laplace problem for $k=0$ is considered in this example, and is defined on the two adjacent/overlapping intervals (top left / top right). While we do elaborate the Hodge-Laplace problem for $k=1$, it is instructive to note the difference in geometrical structure. I.e. the $k=1$ problem  is defined on both the adjacent/overlapping interval, as well as the common boundary point/overlap (bottom left / bottom right). The intervals are separated vertically for visual clarity.
    }
    \label{fig: 1d geometry}
\end{figure}

We write $\Omega_0 = (-1,0)$ and $\Omega_1 = (0,1)$, and $\Omega_{0,1} = \{0\}$. The open cover associated with the equidimensional model is $\Uc = \{U_0, U_1\}$, where $U_0 = (-1, \epsilon)$, $U_1 = (-\epsilon, 1)$ and $U_{0,1} = U_0 \cap U_1 = (-\epsilon, \epsilon)$. While the geometrical description for this example is simple, it covers all interesting aspects of these types of coupled problems that occur in dimension $n=1$. 

The two Hodge-Laplace problems can be written as $D^{*,w}D u = f$ for $k=0$ and $DD^{*,w}v=g$ for $k=1$. 
The right-hand side of the Hodge-Laplace problem represents body forces acting on the rods, and the weights that appear in the inner products (and thus appear in the definition of the adjoint operator) correspond to material parameters.  

For differential $k$-forms $u,v \in L^2\Lambda^k_{w}(U_i)$ with function components $u_j, v_j \in L^2(U_i)$ we define the weighted inner product on $U_i$:
\begin{equation}
\langle u, v \rangle_{L^2\Lambda^k_{w}(U_i)} = \int_{U_i} \sum_{j \in \mathcal{N}^k} w_{k,i} u_j v_j \; \mathrm{Vol}(U_i).
\end{equation}
Similarly for the mixed-dimensional representation, we have weighted inner products on $\Omega_i$:
\begin{equation}
\langle u, v \rangle_{L^2\Lambda^k_{w}(\Omega_i)} = \int_{\Omega_i} \sum_{j \in \mathcal{N}^k} w_{k,i} u_j v_j \; \mathrm{Vol}(\Omega_i).
\end{equation}

We consider \cref{eq: Hodge-Laplace problem} on the geometry shown in \cref{fig: 1d geometry}. In one spatial dimension, the exterior derivative is identified with the usual derivative with $H^1$ function spaces as its domains, and the codifferential is identified with the usual derivative with a change of sign, with domain $H^1_0$. Since we consider weighted inner products, the codifferential operator picks up the weights:
\begin{equation}
d^{*,w}_k= w_{k-1}^{-1} d^{*,1}_k w_k.
\end{equation}

The jump operator $\delta: H^1(U_0)\times H^1(U_1) \to H^1(U_{0,1})$ simply takes the difference of $u_0 \in H^1(U_0)$ and $u_1 \in H^1(U_1)$, restricted to $U_{0,1}$ where they both are defined.

By definition, the adjoint of the difference operator satisfies the following equation:
\begin{subequations}
\begin{align}
\langle \delta u, v \rangle &= \langle u, \delta^*v \rangle,  \\
\int_{U_{0,1}} (u_1| - u_0|)v &= \int_{U_0} u_0(\delta^*_0v) + \int_{U_1} u_1(\delta^*_0v), \\
\int_{U_{0,1}} u_1|v -\int_{U_{0,1}} u_0|v &= \int_{U_0} u_0(\delta^*_0v) + \int_{U_1} u_1(\delta^*_0v).
\end{align}  
\end{subequations}
With weighted inner products, the adjoint of the difference operator is defined as $\delta^{*,w}_i p_{0,1}= w_{i}^{-1} (-1)^{i+1} \mathbbm{1}_i (w_{0,1} p_{0,1})$, where $\mathbbm{1}$ denotes the characteristic function:
\begin{equation}
\mathbbm{1}_i = \begin{cases}
1, \qquad \text{on} \; U_i \\
0, \qquad \text{otherwise}.
\end{cases}
\end{equation}

The diagram for the \v{C}ech-de Rham complex to the open cover $\Uc=\{U_0, U_1\}$ consists of the one-dimensional de Rham complex in the first column, as well as the jump operator taking the difference between two functions $u_1 \in H^1(U_1)$ and $u_0 \in H^1(U_0)$ and mapping the difference onto the $L^2$ space of the overlap $U_{0,1}$:
\begin{equation}
% https://q.uiver.app/#q=WzAsMyxbMCwyLCJIXjEoVV8wKSBcXHRpbWVzIEheMShVXzEpIl0sWzIsMiwiTF4yKFVfezAsMX0pICJdLFswLDAsIkxeMihVXzApIFxcdGltZXMgTF4yKFVfMSkiXSxbMCwxLCJcXGRlbHRhIiwxXSxbMCwyLCJkIiwxXV0=
\begin{tikzcd}
	{L^2(U_0) \times L^2(U_1)} \\
	\\
	{H^1(U_0) \times H^1(U_1)} && {L^2(U_{0,1}) }
	\arrow["d"{description}, from=3-1, to=1-1]
	\arrow["\delta"{description}, from=3-1, to=3-3]
\end{tikzcd}
\end{equation}

\begin{example}    
We consider the \v{C}ech-de Rham complex associated to \cref{fig: 1d geometry} with inner products of $\Ac^1$ weighted by $w = (w_0, w_1, w_{0,1}/\epsilon)$. The primal formulation of the $0$-Hodge-Laplacian is 
\begin{subequations}
\begin{align}    
-\frac{d}{dx}(w_0 \frac{d}{dx}u_0) - \frac{w_{0,1}}{\epsilon}\delta^{*}(u_1 - u_0) &= f_0, \\
-\frac{d}{dx}(w_1 \frac{d}{dx}u_1) - \frac{w_{0,1}}{\epsilon}\delta^{*}(u_1 - u_0) &= f_1,
\end{align}
\end{subequations}
with boundary conditions
\begin{subequations}
\begin{align}
w_0 \frac{d}{dx}u_0 &= 0, \qquad x \in \{-1,  \epsilon\}, \\
w_1 \frac{d}{dx}u_1 &= 0, \qquad x \in \{-\epsilon, 1\}.
\end{align}
\end{subequations}

Moreover, orthogonality to the kernel enforces 
\begin{equation}
\int_{-1}^{\epsilon} u_0 + \int_{-\epsilon}^1 u_1 = 0.
\end{equation}

By introducing the dual variables $\sigma_i = \frac{d}{dx}u_i$ and $\sigma_{0,1} = \delta u=  (u_1 - u_0)|_{U_{0,1}}$, we get the following weak formulation of the $0$-Hodge-Laplace problem: find $(u_0, u_1, \sigma_0, \sigma_1, \sigma_{0,1}) \in \Ac ^0 \times \Ac^1$ satisfying
\begin{subequations}
\begin{align} \label{eq: 1d system}
\sigma_0 &= \frac{d}{dx}u_0, \\
\sigma_1 &= \frac{d}{dx}u_1, \\
\sigma_{0,1} &= u_1 - u_0, \\
-\frac{d}{dx}(w_0 \sigma_0) - \frac{w_{0,1}}{\epsilon}\delta_0^{*} \sigma_{0,1} &= f_0, \\
-\frac{d}{dx}(w_1 \sigma_1) - \frac{w_{0,1}}{\epsilon}\delta_1^{*} \sigma_{0,1} &= f_1, 
\end{align}
\end{subequations}
with boundary conditions
\begin{subequations}
\begin{align}\label{eq: 1d boundary conditions}
\sigma_0(-1) &= 0, \qquad \sigma_0(\epsilon) = 0, \\
\sigma_1(-\epsilon) &=0, \qquad \sigma_1(1) = 0. 
\end{align}
\end{subequations}

Equations \eqref{eq: 1d system}-\eqref{eq: 1d boundary conditions} can be solved explicitly, as shown in the appendix. 
The plots in \cref{fig: displacement plot} and \cref{fig: stress plot} show the obtained solutions for the displacement $(u_0, u_1)$ and the stress $(\sigma_0, \sigma_1, \sigma_{0,1})$, for $\epsilon = 0.2$, $r = 1$ and $w = w_{0,1}= 1$.

\begin{figure}[h]
\centering
\begin{minipage}{.5\textwidth}
  \centering
  \includegraphics[width=1.0\linewidth]{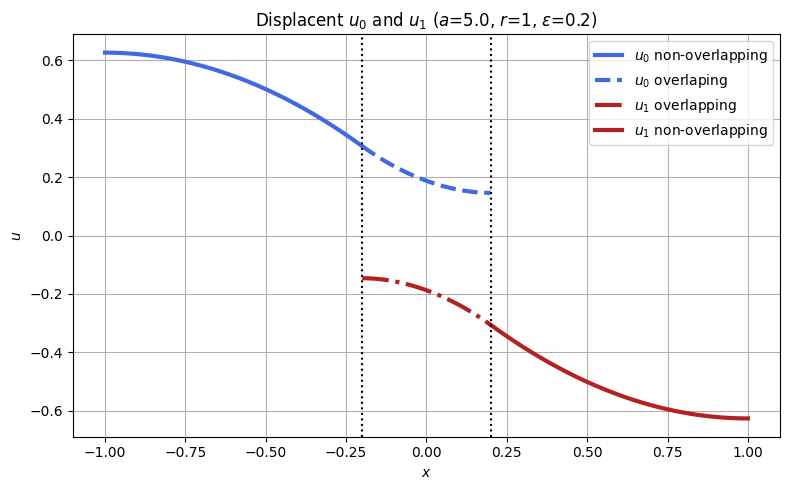}
  \caption{Displacement $u_0$ (blue) and $u_1$ (red).}
  \label{fig: displacement plot}
\end{minipage}%
\begin{minipage}{.5\textwidth}
\captionsetup{width=\linewidth}
  \centering
  \includegraphics[width=1.0\linewidth]{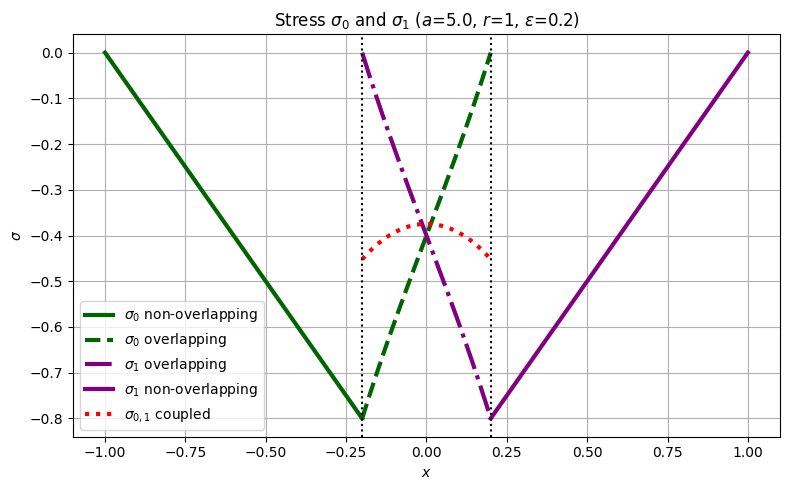}
  \caption{Stress $\sigma_0$ (green), $\sigma_1$ (purple) and coupled stress $\sigma_{0,1}$ (red).}
  \label{fig: stress plot}
\end{minipage}
\end{figure}

%\begin{figure}[htb]
%    \centering
%    \includegraphics[width=1.0\linewidth]{Figures/Fig1 - displacement upd.png}
%    \caption{Displacement $u_0$ (blue) and $u_1$ (red).
%    }
%    \label{fig: displacement plot}
%\end{figure}
%\begin{figure}[htb]
%    \centering
%    \includegraphics[width=1.0\linewidth]{Figures/Fig 2- Stress upd.png}
%    \caption{Stress $\sigma_0$ (green), $\sigma_1$ (purple) and coupled stress $\sigma_{0,1}$ (red).
%    }
%    \label{fig: stress plot}
%\end{figure}

\end{example}
\begin{example}

We now proceed by considering the same system \cref{eq: 1d system} but for the simplicial de Rham complex. Similar to the solutions for the \v{C}ech-de Rham complex, we have quadratic displacement $u_i$ and linear stress $\sigma_i$:
\begin{subequations}
\begin{align}
u_{\Sc,0}(x) &= -\frac{r}{2w}x^2 + A_{\Sc,0} x + B_{\Sc,0}, \qquad &x \in \Omega_0, \\
u_{\Sc,1}(x) &= \frac{r}{2w}x^2 +A_{\Sc,1}x + B_{\Sc,0}, \qquad &x \in \Omega_1,\\ 
\sigma_{\Sc,0}(x) &= -\frac{r}{w}x + A_{\Sc,0}, \qquad &x \in \Omega_0, \\
\sigma_{\Sc,1}(x) &= \frac{r}{w}x+A_{\Sc,1}, \qquad &x \in \Omega_1.
\end{align}
\end{subequations}

The outer boundary conditions determine the same values for $A_{\Sc,i}$ as before:
\begin{align}
A_{\Sc,0} = A_{\Sc,1} = -\frac{r}{w}.
\end{align}
The requirement that the pair $(u_0, u_1)$ is orthogonal to the constants gives $B_{\Sc,0} = -B_{\Sc,1}$, and the equations on the overlap is reduced to an algebraic constraint:
\begin{subequations}
\begin{align}    
 w\frac{d\sigma_{\Sc,i}}{dx}(0) = -r &= w_{0,1} (B_{\Sc,1} - B_{\Sc,0}) \implies B_{\Sc,1} - B_{\Sc,0} = -\frac{r}{w_{0,1}}, \\
 B_{\Sc,0} &= \frac{r}{2 w_{0,1}}, \qquad  B_{\Sc,1} = -\frac{r}{2 w_{0,1}}.
\end{align}
\end{subequations}

To realize the mixed-dimensional solution in the same spaces as the equidimensional solution, we define affine maps $\phi_0: \Tilde{U}_0 \to \Omega_0$, $\phi_1: \Tilde{U}_1 \to \Omega_1$ by
\begin{subequations}
\begin{align}
\phi_0 = \frac{(x + \epsilon)}{(1 - \epsilon)}, \qquad \phi_1 = \frac{(x - \epsilon)}{(1 - \epsilon)}. 
\end{align}
\end{subequations}

Precomposition by $\phi_0, \phi_1$ defines the pullback $\phi^*_0: L^2\Lambda^q(\Omega_0) \to L^2\Lambda^q(\Tilde{U}_0)$ and $\phi^*_1: L^2\Lambda^q(\Omega_1) \to L^2\Lambda^q(\Tilde{U}_1)$. The pullbacks are extended to the codomain $U_i$ by considering a constant extension on the overlap: $u_i$ is mapped to $\phi_{0,1}^*(\tr \: u_i)$ on $U_{0,1}$. Finally, we scale the pullbacks by $(1-\epsilon)$ to account for the dilation caused by the pullback. This defines a cochain map from the simplicial de Rham complex to the \v{C}ech-de Rham complex for the given geometries.

\begin{figure}[h]
    \centering
    \includegraphics[width=.75\linewidth]{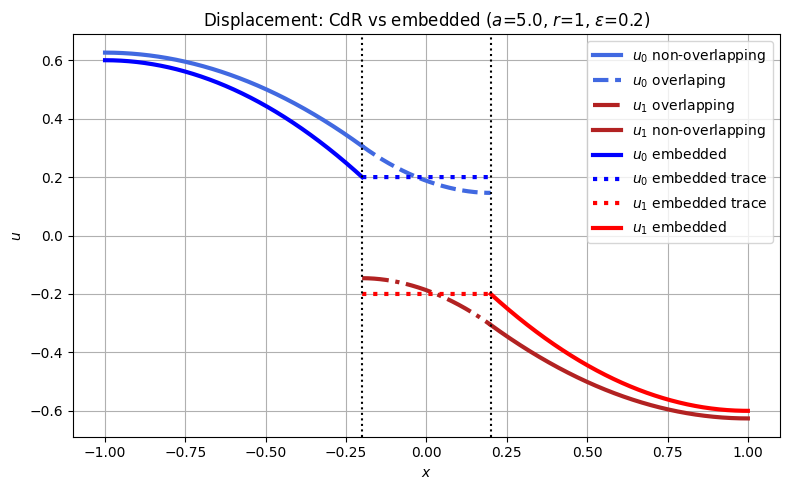}
    \caption{Comparison between embedded and \v{C}ech-de Rham displacement}
    \label{fig: displacement comparison}
\end{figure}

\begin{figure}
\centering
\begin{minipage}{.48\textwidth}
  \centering
  \captionsetup{width=\linewidth}
  \includegraphics[width=1.05\linewidth]{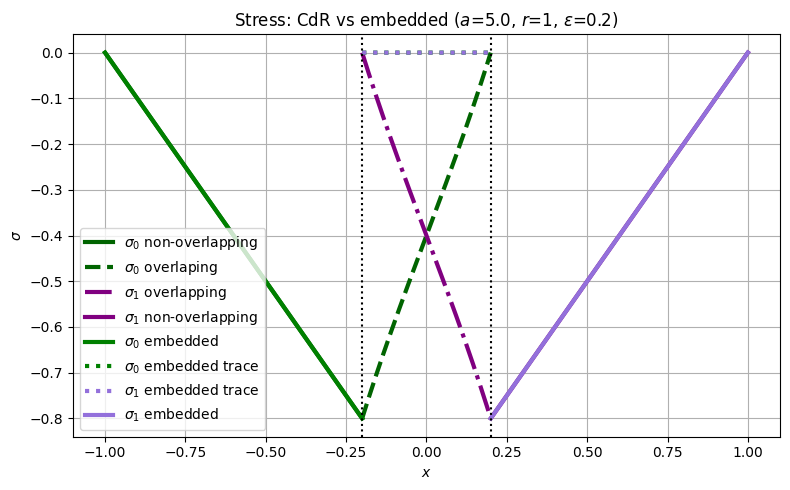}
  \caption{Comparison between embedded and \v{C}ech-de Rham stress.}
  \label{fig: stress comparison}
\end{minipage}\hfill
\begin{minipage}{.48\textwidth}
  \centering
  \captionsetup{width=\linewidth}
  \includegraphics[width=1.05\linewidth]{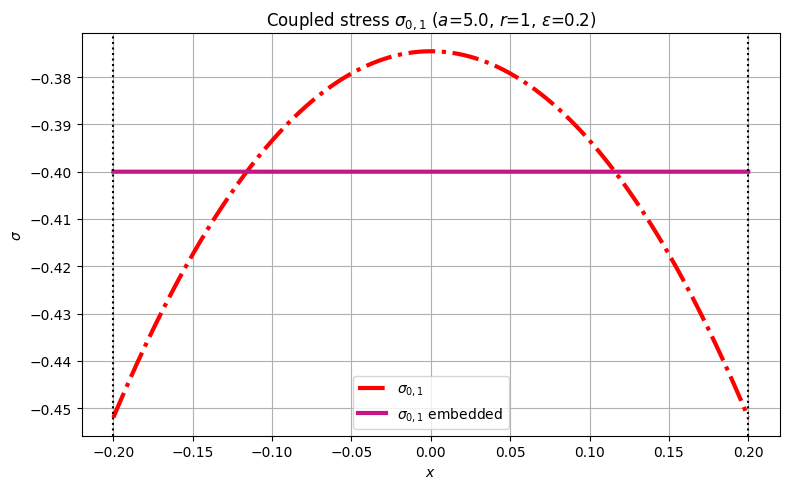}
  \caption{Comparison between embedded and \v{C}ech-de Rham coupled stress.}
    \label{fig: coupled stress comparison}
\end{minipage}
\end{figure}

%\begin{figure}[!ht]
%    \centering
%    \includegraphics[width=1.0\linewidth]{Figures/Fig6 - Stress comparison.png}
%    \caption{Comparison between embedded and \v{C}ech-de Rham stress.}
%    \label{fig: stress comparison}
%%\end{figure}
%\begin{figure}[!ht]
%    \centering
%    \includegraphics[width=1.0\linewidth]{Figures/Fig7 - Coupled stress comparison.png}
%    \caption{Comparison between embedded and \v{C}ech-de Rham coupled stress.}
%    \label{fig: coupled stress comparison}
%\end{figure}
The plots in \cref{fig: displacement comparison}, \cref{fig: stress comparison} and \cref{fig: coupled stress comparison} show the solutions for displacement, stress and coupled stress for both the \v{C}ech-de Rham Hodge-Laplace problem and the embedded mixed-dimensional Hodge-Laplace problem (with the same parameters as before).
\end{example}

\subsection{A posteriori error estimates for the example}
Now that we have both the solution to the Hodge-Laplace problem on the \v{C}ech-de Rham complex and the embedded solution of the simplicial de Rham- Hodge-Laplacian, we can compare the two solutions by relying on the theory presented in \cref{subsection: a posteriori}.

The Poincaré constant can be expressed in terms of the Rayleigh quotient,
\begin{equation}
\inf_{u \perp \ker D} R(u) = \frac{1}{C_{\Ac}^2}.
\end{equation}

where the Rayleigh quotient of $D$ is defined by
\begin{equation}
R_D(u) = \frac{\|Du\|_{\Cc^{1}}^2}{\|u\|_{\Cc^{0}}^2} = \frac{\|u_0'\|^2_{\Cc^{0,1}}+\|u_1'\|_{\Cc^{0,1}}^2+\|u_1-u_0\|^2_{\Cc^{1,0}}}{\|u_0\|^2_{\Cc^{0,0}} +\|u_1\|_{\Cc^{0,0}}^2}.
\end{equation}

We define the Rayleigh quotient associated to the independent operators $d$ and $\delta$:
\begin{subequations}
\begin{align}
R_d(u) = \frac{\|du\|_{\Cc^{0,1}}^2}{\|u\|_{\Cc^{0,0}}^2} = \frac{\|u_0'\|_{\Cc^{0,1}}^2 +\|u_1'\|_{\Cc^{0,1}}^2}{\|u_0\|_{\Cc^{0,0}}^2 +\|u_1\|^2 _{\Cc^{0,0}}}, \\
R_\delta(u) = \frac{\|\delta u\|_{\Cc^{1,0}}^2}{\|u\|_{\Cc^{0,0}}^2} = \frac{\|u_1-u_0\|^2_{\Cc^{1,0}}}{\|u_0\|^2_{\Cc^{0,0}} +\|u_1\|^2_{\Cc^{0,0}}}.
\end{align}
\end{subequations}

The infimum of the first Rayleigh quotient is realized by functions with zero global mean value, while the infimum of the second Rayleigh quotient is realized by constant pairs $(c, -c)$. Standard Poincaré inequality tells us that $1/C^2 = \frac{\pi^2}{\mathrm{diam}(U)^2}$. Since $\ker D$ are the global constant functions, the infimum is realized by the global diameter, i.e. the diameter of the union $U_0 \cup U_1$:
\begin{equation}
\inf_{u \perp \ker D} \frac{\|du\|_{\Cc^{0,1}}^2}{\|u\|_{\Cc^{0,0}}^2} = \frac{w\pi^2}{2^2}.
\end{equation}

On the other hand, the constant pair $(c, -c)$ gives the following quotient:
\begin{equation}
\frac{\|\delta(c, -c)\|_{\Cc^{1,0}}^2}{\|(c, -c)\|_{\Cc^{0,0}}^2} =\frac{\frac{w_{0,1}}{\epsilon}(2c)^2|U_{0,1}|}{|U_0|^2c^2+|U_1|^2 c^2} = \frac{8c^2}{2(1+\epsilon)c^2} = \frac{4w_{0,1}}{(1+\epsilon)}.
\end{equation}

An upper bound for the Poincaré constant can be determined based on which expression dominates: $\frac{1}{C^2} \leq \min\{R_d, R_\delta\}$. In general, the relative size of the quotients $R_d$ and $R_\delta$ depends on the values of $w$ and $w_{0,1}$. If $w/w_{0,1} \approx 1$, then the Rayleigh quotient $R_d$ will be greater, and we can therefore estimate the Poincaré constant by
\begin{equation}
C_{\Ac} \leq \frac{2}{\sqrt{w}\pi}.
\end{equation}

The error estimates for $e^u$ and $e^\sigma$ decompose in accordance with the Hodge decomposition:
\begin{subequations}
\begin{align}
u - u_\Ec &= e^u = e^u_\Bf + e^u_{\Bf^*}+ e^u_{\Hf}, \\
\|e^u\|^2_{\Cc^k} &= \|e^u_\Bf\|^2_{\Cc^k} + \|e^u_{\Bf^*}\|^2_{\Cc^k} + \|e^u_{\Hf}\|^2_{\Cc^k}, \\
\sigma - \sigma_\Ec &= e^\sigma = e^\sigma_\Bf + e^\sigma_{\Bf^*}+ e^\sigma_{\Hf}, \\
\|e^\sigma\|^2_{\Cc^{k+1}} &= \|e^\sigma_\Bf\|^2_{\Cc^{k+1}} + \|e^\sigma_{\Bf^*}\|^2_{\Cc^{k+1}} + \|e^\sigma_{\Hf}\|^2_{\Cc^{k+1}}.
\end{align}
\end{subequations}
Since $u$ and $u_\Ec$ are in $\Ac^0$, then $e^u_\Bf = 0$. Similarly, $\sigma$ and $\sigma_\Ec$ are at the end of the complex, so $\|e^\sigma_{\Bf^*}\|^2_{\Cc^{k+1}}$ and $\|e^\sigma_{\Hf}\|^2_{\Cc^{k+1}}$ are both zero.

The error in the harmonic part of $u$ is zero. In the \v{C}ech-de Rham Hodge-Laplacian, we seek a solution which satisfies $u \perp \Hf^0(\Cc)$, hence $u_\Hf = 0$. $u_\Sc$ also satisfies $u_\Sc \perp \Hf^0(\Sc)$ and since $u_{\Ec,0}|_{U_{0,1}} = -u_{\Ec,1}|_{U_{0,1}}$, it follows that $u_\Ec \perp \Hf^0(\Cc)$, and therefore we have that $\|e^u_{\Hf}\| = 0$. 

The only non-zero parts of the errors are then $e^u = e^u_{\Bf^*}$ and $e^\sigma = e^\sigma_\Bf$. The error estimates that are obtained in \cref{theorem: a posteriori} tell us
\begin{subequations}
\begin{align}
\|e^u\|^2 &= \|e^u_{\Bf^*}\|^2 \leq \left(c_\Ac^2\|D^*Du_{\Ec} - f \|\right)^2, \label{eq: a posteriori u} \\ 
\|e^\sigma\|^2  &= \|e^\sigma_{\Bf}\|^2 \leq \left(c_\Ac \|D^*\sigma_\Ec - f\| \right)^2 = \left(c_\Ac \|D^*Du_{\Ec} - f\| \right)^2. \label{eq: a posteriori sigma}
\end{align}
\end{subequations}

In order to get reasonable values for the right-hand side of these inequalities, we consider a smoothed solution $\hat\sigma$. If we were to use the embedded solution $\sigma_\Ec$, then on the overlap, the residuals read
\begin{align}    
D^*\sigma_\Ec - f = d^* \sigma_{\Ec,i} +\delta^* \sigma_{\Ec,(0,1)} - 0.
\end{align}
However, the embedded stress is constant (zero) on the overlap, so the first term vanishes. Because we have inner products weighted by $\epsilon^{-1}$ in $\Ac^{1,0}$, the operator $\delta^*$ absorbs that weight and we are left with a term which blows up as $\epsilon \to 0$.

To avoid this blow-up, we define a smoother stress variable which is non-constant on the overlap (and stays the same as $\sigma_\Ec$ on the non-overlapping region), such that the term $d^* \sigma_i$ and $\delta^*\sigma_{0,1}$ can balance each other. Since $\sigma_{0,1}$ is a constant value, we write $\hat\lambda = \frac{w_{0,1}}{w\epsilon}\hat{\sigma}_{0,1}$, and define $\hat\sigma_i$ on the overlap by:

\begin{subequations}    
\begin{align}
\hat{\sigma}_0(x) &= \sigma_{\Ec,0}(-\epsilon) - \lambda(x+\epsilon), \qquad x \in (-\epsilon, \epsilon), \\
\hat{\sigma}_1(x) &= \sigma_{\Ec,1}(\epsilon) + \lambda(x-\epsilon), \qquad x \in (-\epsilon, \epsilon).
\end{align}
\end{subequations}

When we apply the codifferential to $\hat\sigma_i$, we get
\begin{equation}
d^*\hat\sigma_0 = w\hat\lambda =\frac{w_{0,1}}{\epsilon}\hat{\sigma}_{0,1} = (\delta^*\sigma_{0,1})_0.
\end{equation}
For $\hat\sigma = (\hat\sigma_0, \hat\sigma_1, \sigma_{\Ec, (0,1)})$, we get $D^*\hat\sigma = 0$ exactly on the overlap.

Since $\hat\sigma = \sigma_\Ec$ on $\tilde{U}_i$, we have that $D^*\hat\sigma = d^*\hat\sigma = \pm (1-\epsilon) r$. Together, the residual is then evaluated to
\begin{equation}
\|D^*\sigma_{\Ec} - f \| = \|\mathbbm{1}_{\tilde{U}_i} \left( (1-\epsilon) \pm r \right) -\mathbbm{1} (\pm r) \| = \|\mathbbm{1}_{\tilde{U}_i} (\pm \epsilon r)\| = 2\epsilon r \sqrt{1-\epsilon}. 
\end{equation}
The residual is $O(\epsilon)$, and using $C_{\Ac} \leq \frac{2}{\sqrt{w} \pi}$, we get a reliable and robust estimate for the error of the displacement.

The same smoothed solution $\hat\sigma$ does not give a reliable upper bound for the error $\|e^\sigma\|$, because the error is $O(\sqrt{\epsilon})$, while the residual of $\hat\sigma$ is $O(\epsilon)$. Instead of considering a smoothed solution which balances $d^*$ and $\delta^*$ exactly on the overlap, we seek a smoothed solution $\tilde{\sigma} = (\tilde{\sigma}_0, \tilde{\sigma}_1, \sigma_{\Ec, (0,1)})$ which whose residual is constant on the overlap $(-\epsilon, \epsilon)$, 

Similarly to $\hat\sigma$, we let $\tilde\lambda = \frac{1}{w}(\frac{w_{0,1}}{\epsilon}\hat{\sigma}_{0,1} +1)$, and define

\begin{subequations}    
\begin{align}
\tilde{\sigma}_0(x) &= \sigma_{\Ec,0}(-\epsilon) - \tilde\lambda(x+\epsilon), \qquad x\in (-\epsilon, \epsilon), \\
\tilde{\sigma}_1(x) &= \sigma_{\Ec,1}(\epsilon) + \tilde\lambda(x-\epsilon), \qquad x\in (-\epsilon, \epsilon). 
\end{align}
\end{subequations}

Then $D^*\tilde\sigma - f  = \pm 1$ exactly on the overlap, and equal to $\pm\epsilon r$ outside the overlap. The constant value $1$ on the overlap ensures robustness for the error estimates, since the $O(\epsilon^{1/2})$ residual matches the asymptotics of the error $e^\sigma$.

\Cref{tab: error data} shows the error of the displacement $u$, the efficiency index of \cref{eq: a posteriori u}, the error of the stress $\sigma$ and the efficiency index of \cref{eq: a posteriori sigma}.

\begin{table}[ht]
\centering
\caption{Table of errors, with convergence rates and efficiency indices.}
\label{tab: error data}
\small
\begin{tabular}{c p{1.4cm} p{1.4cm} c p{1.4cm} p{1.4cm} c}
\toprule
$\epsilon$ & Error $e^u$ & Rate $e^u$ & Efficiency $u$ & Error $e^\sigma$ & Rate $e^\sigma$ & Efficiency $\sigma$ \\
\midrule
%4.000e-01 & 9.402e-02 & 2.856 & 6.593e-01 & 2.924 \\
2.000e-01 & 7.311e-02 & 0.766 & 2.032 & 6.130e-01 & 0.430 & 2.057 \\
%1.000e-01 & 4.385e-02 & 1.751 & 4.819e-01 & 1.683 \\
5.000e-02 & 2.384e-02 & 0.941 & 1.632 & 3.562e-01 & 0.487 & 1.479 \\
%2.500e-02 & 1.241e-02 & 1.576 & 2.566e-01 & 1.354 \\
1.250e-02 & 6.330e-03 & 0.985 & 1.549 & 1.828e-01 & 0.498 & 1.274 \\
%6.250e-03 & 3.196e-03 & 1.536 & 1.296e-01 & 1.221 \\
3.125e-03 & 1.606e-03 & 0.996 & 1.529 & 9.165e-02 & 0.500 & 1.185 \\
%1.563e-03 & 8.049e-04 & 1.526 & 6.477e-02 & 1.160 \\
7.813e-04 & 4.029e-04 & 0.999 & 1.525 & 4.577e-02 & 0.500 &  1.143 \\
%3.906e-04 & 2.016e-04 & 1.524 & 3.235e-02 & 1.131 \\
1.953e-04 & 1.008e-04 & $\cdots$ & 1.523 & 2.287e-02 & $\cdots$ & 1.122 \\
\bottomrule
\end{tabular}
\end{table}

\subsection*{Acknowledgments}
DFH and JMN acknowledge the support of the VISTA program, The Norwegian Academy of Science and Letters, and Equinor. JMN acknowledges the support of the Centre for Advanced Study in Oslo, Norway that funded and hosted the research project Mathematical Challenges in Brain Mechanics during the academic year of 2025/2026.

\bibliographystyle{plain}
\bibliography{References.bib}

\appendix
\section{Explicit solution to  equations \eqref{eq: 1d system}}
\label{appendix}

We make use of two assumptions which simplify the equations considerably: we assume that $w_0 = w_1 = w$ and $w_{0,1}$ are constant functions (i.e. the same uniformly distributed material property for the two rods, with a possibly different material property on the overlap) and we assume that $f=(f_0, f_1) = (\mathbbm{1}_{\Tilde{U}_0}r,-\mathbbm{1}_{\Tilde{U}_1}r)$, with $r \in \R$.

We can use the expression $\sigma_{0,1} = u_1|_{U_{0,1}} - u_0|_{U_{0,1}}$ to eliminate one equation and variable. We are left with the following system:
\begin{subequations}\label{eq: 1d system ODE}
\begin{align} 
\frac{du_0}{dx} &= \sigma_0, \\
\frac{du_1}{dx} &= \sigma_1, \\
-w\frac{d\sigma_{0}}{dx} + \frac{w_{0,1}}{\epsilon} \mathbbm{1}_{U_{0,1}} (u_1 - u_0)  &= \mathbbm{1}_{\Tilde{U}_0} r, \label{eq: 1d system ODE d} \\
-w\frac{d\sigma_{1}}{dx} - \frac{w_{0,1}}{\epsilon} \mathbbm{1}_{U_{0,1}} (u_1 - u_0)  &= -\mathbbm{1}_{\Tilde{U}_1}r  \label{eq: 1d system ODE e},
\end{align}
\end{subequations}

The characteristic function $\mathbbm{1}$ is equal to zero outside of $U_{0,1}$ and equal to $1$ on $U_{0,1}$. We can therefore separate the problem into two cases: outside the overlap and on the overlap.

When we are outside $U_{0,1}$, the right-hand side of \cref{eq: 1d system ODE d} and \cref{eq: 1d system ODE e} are equal to zero. The system of equations reduce to
\begin{subequations}
\begin{align}
\frac{du_0}{dx} &= \sigma_0, \\
\frac{du_1}{dx} &= \sigma_1, \\
-w\frac{d\sigma_{0}}{dx}  &= r, \\
-w\frac{d\sigma_{1}}{dx}  &= -r.
\end{align}
\end{subequations}
There is no coupling between the equations, and the solution we obtain is as follows:
\begin{subequations}
\begin{align}\label{eq: solution outside u01}
u_0(x) &= -\frac{r}{2w}x^2 + A_0x + B_0, \qquad &x \in \Tilde{U}_0, \\
u_1(x) &= \frac{r}{2w}x^2 +A_1x+B_1, \qquad &x \in \Tilde{U}_1,\\ 
\sigma_0(x) &= -\frac{r}{w}x + A_0, \qquad &x \in \Tilde{U}_0, \\
\sigma_1(x) &= \frac{r}{w}x+A_1, \qquad &x \in \Tilde{U}_1,
\end{align}
\end{subequations}
with $A_i, B_i \in \R$. The constants $A_i$ are determined by the boundary conditions \cref{eq: 1d boundary conditions}:
\begin{subequations}
\begin{align}
\sigma_0(-1) = \frac{r}{w} + A_0 = 0 \implies A_0 =-\frac{r}{w}, \\
\sigma_1(1) = \frac{r}{w} + A_1 = 0 \implies A_1 =-\frac{r}{w}.
\end{align}
\end{subequations}

On the overlap, we have the system
\begin{subequations}
\begin{align} 
\frac{du_0}{dx} &= \sigma_0, \\
\frac{du_1}{dx} &= \sigma_1, \\
-w\frac{d\sigma_{0}}{dx} + \frac{w_{0,1}}{\epsilon}(u_1 - u_0)  &= 0, \label{eq: 1d OVsystem 3} \\
-w\frac{d\sigma_{1}}{dx} - \frac{w_{0,1}}{\epsilon} (u_1 - u_0)  &= 0 \label{eq: 1d OVsystem 4}.
\end{align}
\end{subequations}
Write $u_\Sigma = u_0 + u_1$ and $\sigma_{0,1} = u_1-u_0$. Then by adding \cref{eq: 1d OVsystem 3} and \cref{eq: 1d OVsystem 4}, and taking the difference \cref{eq: 1d OVsystem 4} minus \cref{eq: 1d OVsystem 3}, we get

\begin{subequations}
\begin{align}
-w \frac{d^2}{dx^2}u_\Sigma &= 0, \\
-w \frac{d^2}{dx^2}\sigma_{0,1}+ \frac{2w_{0,1}}{\epsilon}\sigma_{0,1} &= 0.
\end{align}
\end{subequations}
Solving for $u_\Sigma$ and $\sigma_{0,1}$, we get
\begin{subequations}
\begin{align}
u_\Sigma(x) &= c_1 x + c_2, \\
\sigma_{0,1}(x)&= c_3 e^{\mu x} + c_4e^{-\mu x}.
\end{align}
\end{subequations}
where $\mu = \sqrt{\frac{2w_{0,1}}{w\epsilon}}$.  Since $u_\Sigma = u_0 +u_1$ is an odd function, $c_2 = 0$.

We can use $u_\Sigma$ and $\sigma_{0,1}$ to recover solutions for $u_i$, since $u_0 = \frac12 (u_\Sigma - \sigma_{0,1})$ and $u_1 = \frac12 (u_\Sigma + \sigma_{0,1})$:
\begin{subequations}
\begin{align}
u_0(x)=\frac12(u_\Sigma-\sigma_{0,1}) &= \frac12\left(c_1 x   - c_3 e^{\mu x} - c_4 e^{-\mu x} \right), \\   
u_1(x)=\frac12(u_\Sigma+\sigma_{0,1}) &= \frac12 \left(c_1 x + c_3 e^{\mu x} + c_4 e^{-\mu x}\right). 
\end{align}    
\end{subequations}
Recover the stress $\sigma_i$ on the overlap:
\begin{subequations}
\begin{align}
\sigma_0(x) &= \frac12\left(c_1 - \mu c_3  e^{\mu x} + \mu c_4e^{-\mu x} \right), \\  
\sigma_1(x) &= \frac12 \left(c_1 + \mu c_3 e^{\mu x} -\mu c_4e^{-\mu x}\right). 
\end{align}    
\end{subequations}

Inner boundary conditions and continuity between the solutions outside and inside the overlap determine the remaining constants $c_1, c_3, c_4$:
\begin{subequations}
\begin{align}
\sigma_0(-\epsilon) &=  \frac12\left(c_1 - \mu c_3  E^- + \mu c_4E^+ \right) = \frac{r}{w}(\epsilon - 1), \label{eq: ct a} \\
\sigma_0(\epsilon) &= \frac12\left(c_1 - \mu c_3 E^+ + \mu c_4 E^- \right) = 0, \label{eq: ct b} \\
\sigma_1(-\epsilon) &= \frac12 \left(c_1 + \mu c_3 E^- -\mu c_4 E^+\right) =0, \label{eq: ct c} \\
\sigma_1(\epsilon) &=  \frac12 \left(c_1 + \mu c_3 E^+ -\mu c_4 E^-\right) = \frac{r}{w}(\epsilon - 1). \label{eq: ct d}
\end{align}    
\end{subequations}

Add \cref{eq: ct b} and \cref{eq: ct c} together, and we have 
\begin{equation}
c_1 = \frac\mu2\left(c_3 + c_4\right)(E^+-E^-). \label{eq: 4.22}
\end{equation}
\cref{eq: ct a} minus \cref{eq: ct c} and \cref{eq: ct b} minus \cref{eq: ct d}:
\begin{subequations}
\begin{align}
\sigma_0(-\epsilon)-\sigma_1(-\epsilon) &= \mu \left(-c_3 E^- + c_4 E^+  \right) = \frac{r}{w}(\epsilon - 1), \label{eq: ct2 a} \\
\sigma_0(\epsilon)- \sigma_1(\epsilon) &= \mu\left(-c_3 E^+ + c_4 E^-\right) =- \frac{r}{w}(\epsilon - 1).  \label{eq: ct2 b}
\end{align}
\end{subequations}
Add together \cref{eq: ct2 a} and \cref{eq: ct2 b}, we find that
\begin{align}
 \mu(-c_3 + c_4)(E^+ + E^-) =0 \implies c_3 = c_4.
\end{align}
Putting back $c_3 = c_4$ into \cref{eq: ct2 a}
\begin{subequations}
\begin{align}
\mu \left(-c_3 E^- + c_3 E^+  \right) &= \frac{r}{w}(\epsilon - 1), \\
c_3 (E^+ - E^- )  &=  \frac{r}{w\mu}(\epsilon - 1), \\
c_3 = c_4 &=  \frac{r}{2w\mu\sinh(\mu\epsilon)}(\epsilon - 1).
\end{align}
\end{subequations}
Putting this value for $c_3$ and $c_4$ into \cref{eq: 4.22} determines $c_1$:
\begin{equation}
c_1 = \frac{r}{2w}(\epsilon - 1).
\end{equation}

\end{document}